\documentclass[a4paper,12pt,reqno]{amsart}
\usepackage[T1]{fontenc}
\usepackage[utf8]{inputenc}
\usepackage[libertinus,vvarbb]{newtx}
\usepackage[margin=1in]{geometry}
\usepackage{enumitem}
\usepackage{graphicx}
\usepackage[svgnames]{xcolor}
\usepackage{xparse}
\usepackage{xurl}
\usepackage[bookmarksdepth=2]{hyperref}

\hypersetup{colorlinks=true,urlcolor=MidnightBlue,linkcolor=MidnightBlue,citecolor=MidnightBlue}

\usepackage[normalem]{ulem}

\newcommand{\stkout}[1]{\ifmmode\text{\sout{\ensuremath{#1}}}\else\sout{#1}\fi}
\newcommand{\crsout}[1]{\ifmmode\text{\xout{\ensuremath{#1}}}\else\xout{#1}\fi}

\numberwithin{equation}{section}

\newtheorem{theorem}{Theorem}[section]
\newtheorem{lemma}[theorem]{Lemma}
\newtheorem{corollary}[theorem]{Corollary}

\theoremstyle{definition}

\newtheorem{remark}[theorem]{Remark}

\DeclareMathOperator{\Cl}{Cl}

\newcommand{\C}{\mathbb{C}}
\newcommand{\R}{\mathbb{R}}

\DeclareMathOperator{\dom}{dom}
\newcommand{\form}{\mathscr{E}}
\newcommand{\dir}{{\mathrm{D}}}
\newcommand{\neu}{{\mathrm{N}}}
\newcommand{\sub}{{\mathrm{S}}}
\newcommand{\fourier}{\mathscr{F}}
\newcommand{\ph}{\varphi}

\newcommand{\thet}{\vartheta}

\renewcommand{\le}{\leqslant}
\renewcommand{\ge}{\geqslant}

\NewDocumentCommand{\formula}{ssom}{%
 \IfBooleanTF{#1}{%
  \IfBooleanTF{#2}{%
   \IfValueTF{#3}%
    {\begin{align}\label{#3}\begin{gathered}#4\end{gathered}\end{align}}%
    {\begin{gather}#4\end{gather}}%
  }{%
   \IfValueTF{#3}%
    {\begin{align}\label{#3}\begin{aligned}#4\end{aligned}\end{align}}%
    {\begin{gather*}#4\end{gather*}}%
  }%
 }{%
  \IfValueTF{#3}%
   {\begin{align}\label{#3}#4\end{align}}%
   {\begin{align*}#4\end{align*}}%
 }%
}

\newcommand{\nist}[2]{\href{https://dlmf.nist.gov/#1\#E#2}{Eq.~#1.#2}}
\newcommand{\doi}[1]{\href{https://doi.org/#1}{\textsf{\scriptsize DOI:#1}}}
\newcommand{\arxiv}[1]{\href{https://arxiv.org/abs/#1}{\textsf{\scriptsize arXiv:#1}}}
\newcommand{\isbn}[1]{\textsf{ISBN:#1}}

\begin{document}

\title{Dirichlet--Neumann bracketing for nonlocal operators}
\author{Mateusz Kwaśnicki, Jacek Wszoła}
\thanks{Work supported by the National Science Centre, Poland, grant no.\@ 2023/49/B/ST1/04303}
\address{Mateusz Kwaśnicki, Jacek Wszoła \\ Department of Analysis and Stochastic Processes \\ Wrocław University of Science and Technology \\ ul. Wybrzeże Wyspiańskiego 27 \\ 50-370 Wrocław, Poland}
\email{\textsf{\href{mailto:mateusz.kwasnicki@pwr.edu.pl}{mateusz.kwasnicki@pwr.edu.pl}, \href{mailto:jacek.wszola@pwr.edu.pl}{jacek.wszola@pwr.edu.pl}}}
\keywords{Fractional Laplace operator, complete Bernstein function, eigenvalues}
\subjclass[2020]{%
 35P15, 
 35R11
}

\begin{abstract}
We establish Dirichlet--Neumann bracketing for the Dirichlet eigenvalues of $\psi(-\Delta)$ on bounded Lipschitz domains, where $\psi$ is an arbitrary complete Bernstein function. The eigenvalues lie between $\psi$ applied to the corresponding Neumann and Dirichlet eigenvalues of the Laplacian. Both inequalities are strict whenever $\psi$ admits no meromorphic continuation to $\C \setminus \{0\}$. The proof uses quadratic forms, operator monotonicity, and an analysis of equality in resolvent comparisons. Applying the bracketing to intervals and balls gives a unified proof of simplicity of interval eigenvalues and antisymmetry of second eigenfunctions in balls under the same condition on $\psi$. For fractional powers, these recover results of Fall, Ghimenti, Micheletti and Pistoia for the interval, and of Fall, Feulefack, Temgoua and Weth and, independently, Benedikt, Bobkov, Dhara and Girg for the ball. The argument extends these conclusions to a broader class of nonlocal operators.
\end{abstract}

\maketitle

%
%

\section{Introduction}

Denote by $\Delta$ the Laplace operator in $\R^d$, and let $-(-\Delta)^{\alpha / 2}$ be the fractional Laplace operator of order $\alpha \in (0, 2)$. For a nonempty and bounded Lipschitz open set $D$, denote by $\Delta^\dir$ and $\Delta^\neu$ the Dirichlet and the Neumann Laplacians in $D$, respectively. Let $\mu_n^\dir$ and $\mu_n^\neu$ be the eigenvalues of $-\Delta^\dir$ and $-\Delta^\neu$, repeated according to their multiplicity and arranged in a nondecreasing order. Finally, denote by $(-(-\Delta)^{\alpha / 2})^\dir$ the Dirichlet fractional Laplace operator in $D$, and by $\lambda_n$ the nondecreasing sequence of eigenvalues of $((-\Delta)^{\alpha / 2})^\dir$, again repeated according to their multiplicity.

By a classical result of DeBlassie (Theorem~1.3 in~\cite{deblassie}) and Chen and Song (Theorem~3.3 in~\cite{cs:2}), we have
\formula{
 ((-\Delta)^{\alpha / 2})^\dir & \le (-\Delta^\dir)^{\alpha / 2}
}
in the sense of quadratic forms, and hence
\formula[eq:fractional:upper]{
 \lambda_n & \le (\mu_n^\dir)^{\alpha / 2}
}
for $n = 1, 2, \ldots$\, It was observed by Nazarov (Theorem~3 in~\cite{nazarov}) that we also have
\formula{
 ((-\Delta)^{\alpha / 2})^\dir & \ge (-\Delta^\neu)^{\alpha / 2} ,
}
which implies that
\formula[eq:fractional:lower]{
 \lambda_n & \ge (\mu_n^\neu)^{\alpha / 2}
}
for $n = 1, 2, \ldots$\, In this paper we observe that the Dirichlet--Neumann bracketing given by~\eqref{eq:fractional:upper} and~\eqref{eq:fractional:lower} leads to a very short and independent proof of two recent results resolving twenty-year-old conjectures of Bañuelos and Kulczycki regarding the simplicity of eigenvalues when $D$ is an interval (proved by Fall, Ghimenti, Micheletti and Pistoia in~\cite{fgmp}) and the type of symmetry of the second eigenfunction when $D$ is a ball (proved by Fall, Feulefack, Temgoua and Weth in~\cite{fftw}, and by Benedikt, Bobkov, Dhara and Girg in~\cite{bbdg}). The origins of these conjectures go back to the work of Bañuelos and Kulczycki~\cite{bk}, and we refer to~\cite{dfw,dkk:2,ferreira,kkms,kwasnicki:1,kwasnicki:2} for further partial results.

In fact, our approach allows us to replace the fractional power $\psi(\xi) = \xi^{\alpha / 2}$ by an arbitrary complete Bernstein function, leading to the following extensions of the results mentioned above.

\begin{theorem}[Dirichlet--Neumann bracketing]
\label{thm:main}
Let $\psi$ be a complete Bernstein function and $D$ a nonempty bounded Lipschitz domain in $\R^d$. Denote by $\mu_n^\neu$ and $\mu_n^\dir$ the Neumann and Dirichlet eigenvalues of $-\Delta$ in $D$, arranged in a nondecreasing order, and let $\lambda_n$ denote the Dirichlet eigenvalues of $\psi(-\Delta)$ in $D$. Then
\formula[eq:main]{
 \psi(\mu_n^\neu) & \le \lambda_n \le \psi(\mu_n^\dir)
}
for $n = 1, 2, \ldots\,$ The inequalities are strict unless $\psi$ extends to a meromorphic function in $\C \setminus \{0\}$.
\end{theorem}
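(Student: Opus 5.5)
The plan is to prove the two operator inequalities
\[
 \psi(-\Delta^\neu) \le \psi(-\Delta)^\dir \le \psi(-\Delta^\dir)
\]
in the sense of quadratic forms on $L^2(D)$. Once these hold, \eqref{eq:main} follows from the min--max principle. For the inequalities we use the Stieltjes representation of a complete Bernstein function:
\[
 \psi(\xi) = a + b\xi + \int_{(0,\infty)} \frac{\xi}{\xi+s}\,m(ds), \qquad \text{where} \quad \frac{\xi}{\xi+s} = 1 - s(\xi+s)^{-1}.
\]
This reduces everything to comparing resolvents. The affine part $a+b\xi$ is handled by the form inclusions
\[
 H^1_0(D)\subseteq \{u\in H^1(\R^d): u=0 \text{ off } D\}\subseteq H^1(D),
\]
with equal forms on the common domains.

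The key comparison for each $s>0$ is
\begin{equation*}
 (-\Delta^\dir+s)^{-1} \le P_D(-\Delta+s)^{-1}E_D \le (-\Delta^\neu+s)^{-1}.
\end{equation*}
Here $E_D$ is extension by zero and $P_D$ is restriction to $D$. Both inequalities follow from the variational characterisation
\[
 \langle (A+s)^{-1}f,f\rangle=\sup_u\bigl(2\operatorname{Re}\langle f,u\rangle-\form_A(u,u)-s\|u\|^2\bigr),
\]
since the supremum runs over nested domains: $H^1_0(D)$, then $H^1(\R^d)$ with the $L^2$ pairing against $E_Df$, then $H^1(D)$. The $H^1(\R^d)$ term dominates the $H^1(D)$ term after restriction, because $\int_{\R^d}|\nabla u|^2\ge\int_D|\nabla u|^2$. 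Integrating against $m$ gives
\[
 \psi(-\Delta^\neu)\le P_D\psi(-\Delta)E_D \le \psi(-\Delta^\dir)
\]
in the sense of bounded-below forms. This needs a limiting argument with the bounded truncations $\psi_\epsilon(\xi)=\int_{(\epsilon,\infty)}\dots$ and monotone convergence of forms. The remaining step links $P_D\psi(-\Delta)E_D$ to $\psi(-\Delta)^\dir$, which is the form of $\psi(-\Delta)$ restricted to functions vanishing off $D$. The upper bound is immediate: for $u\in H^1_0(D)$ we have $\langle\psi(-\Delta)E_Du,E_Du\rangle\le\langle\psi(-\Delta^\dir)u,u\rangle$, because for each $s$ the form of $\xi/(\xi+s)$ is $\|u\|^2-s\langle(\cdot+s)^{-1}u,u\rangle$, and this is monotone in the resolvent by the first inequality above. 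The lower bound needs the reverse route. We write $\psi(-\Delta^\neu)\le\psi(-\Delta)^\dir$ via operator monotonicity: the form of $\psi(-\Delta)^\dir$ is that of $\psi(-\Delta)$ evaluated on $E_Du$, and the resolvent comparison gives $\langle s(-\Delta+s)^{-1}E_Du,E_Du\rangle\le \langle s(-\Delta^\neu+s)^{-1}u,u\rangle$. Hence each Stieltjes integrand satisfies the right inequality, and integrating gives the bound on the form core $C_c^\infty(D)$.

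Strictness is the main obstacle. Suppose $\lambda_n=\psi(\mu_n^\dir)$, say. By min--max there is a nonzero $u$, a Dirichlet eigenfunction combination in the span of the first $n$ eigenvectors, at which equality holds in the integrated inequality. Then equality must hold for $m$-almost every $s$, that is,
\[
 \langle(-\Delta+s)^{-1}E_Du,E_Du\rangle=\langle(-\Delta^\dir+s)^{-1}u,u\rangle \quad \text{for } s\in\operatorname{supp}m.
\]
In the variational problem this forces the maximiser over $H^1(\R^d)$ to coincide with the Dirichlet one, so $E_D(-\Delta^\dir+s)^{-1}u$ solves $(-\Delta+s)w=E_Du$ on all of $\R^d$. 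Consequently the normal derivative of $(-\Delta^\dir+s)^{-1}u$ vanishes on $\partial D$. For the Neumann case, the analogous conclusion is that $(-\Delta+s)^{-1}E_Du$ vanishes identically outside $D$. Both quantities are meromorphic in $s\in\C\setminus(-\infty,0]$; expanding in the eigenbasis, they extend to rational or meromorphic functions with poles at $-\mu_k$. If the vanishing holds on a set with an accumulation point, it holds for all $s$. The delicate part is the Neumann side, where one uses unique continuation for $(-\Delta+s)$ and the Fourier transform of $E_Du$. It then remains to convert ``equality for all $s\in\operatorname{supp} m$'' into a restriction on $m$. Equality on a set with an accumulation point in $(0,\infty)$ propagates to all $s>0$ and yields a contradiction, since $u\ne0$ cannot have a vanishing normal derivative together with zero boundary values. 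So $\operatorname{supp}m$ must be discrete in $(0,\infty)$, with accumulation only at $0$ or $\infty$. That is exactly the condition that $\psi$ be meromorphic in $\C\setminus\{0\}$, with poles at $-s$ for atoms $s$ of $m$. Making this accumulation argument rigorous, in particular handling accumulation at $\infty$ and the eigenvalue multiplicity in min--max, is where I expect most of the work.
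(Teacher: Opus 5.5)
Your weak inequalities follow the paper's route: compare the resolvents of $\Delta^\dir$, $\Delta$ and $\Delta^\neu$, integrate against the Stieltjes measure, then apply min--max. The skeleton of your strictness argument also matches. Equality of the subordinate forms at some $u \ne 0$ gives equality of the resolvent forms on $\operatorname{supp} m$. Uniqueness of the maximiser (your version of the paper's ``$T \ge 0$ and $\langle T u, u \rangle = 0$ imply $T u = 0$'') then gives equality of the resolvent vectors, which you continue analytically through an accumulation point.

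The gap is the final contradiction, the one step that needs a specific property of the free Laplacian. On the Dirichlet side you argue that $u \ne 0$ cannot have vanishing normal derivative together with zero boundary values. But the function with zero Cauchy data is $w_s = (-\Delta^\dir + s)^{-1} u$, not $u$, and for a single $s$ this is no contradiction. For example, take $u = (-\Delta + s) \phi$ with $\phi \in C_c^\infty(D)$; then $w_s = \phi$. Nor does your claim hold for arbitrary nonzero combinations of Dirichlet eigenfunctions, which is all your min--max step provides. On $(0, \pi)$, the function $\sin x - \tfrac13 \sin 3x = \tfrac43 \sin^3 x$ has zero Cauchy data. On the Neumann side, which you call the delicate one, you only point to ``unique continuation and the Fourier transform'' without giving an argument.

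The paper's idea handles both sides at once, and neither is more delicate than the other. Your Dirichlet conclusion $E_D (-\Delta^\dir + s)^{-1} u = (-\Delta + s)^{-1} E_D u$ says that $(-\Delta + s)^{-1} E_D u$ vanishes off $D$, which is exactly your Neumann conclusion. After continuation this holds for every $s \in \C \setminus (-\infty, 0]$. Since $\fourier(E_D u)$ is a nonzero entire function on $\C^d$, you can pick $\tilde\xi \in \C^d$ with $\fourier(E_D u)(\tilde\xi) \ne 0$ and $s = -\tilde\xi \cdot \tilde\xi \notin (-\infty, 0]$. Then $\fourier(E_D u)(\xi) / (s + \xi \cdot \xi)$ has a pole at $\tilde\xi$. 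So it cannot be the entire Fourier transform of the compactly supported function $(-\Delta + s)^{-1} E_D u$.

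In the paper this is the statement that no nontrivial closed subspace of $L^2(D)$ is invariant for $-\Delta$. It is why complex $s$ is needed, and hence an accumulation point of $\operatorname{supp} m$ in $(0, \infty)$. Accumulation at $\infty$ needs no separate treatment: failure of meromorphy means exactly that such an accumulation point exists.

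Alternatively, you can rescue your own idea by sharpening the min--max step. Take $u$ in the span of the first $n$ eigenfunctions of one operator and orthogonal to the first $n-1$ eigenfunctions of the other. A nonconstant complete Bernstein function is strictly increasing, so this $u$ is a genuine eigenfunction of $-\Delta^\dir$ (resp.\ $-\Delta^\neu$), say with eigenvalue $\mu$. Equality at even a single $s \in \operatorname{supp} m$ then gives $(-\Delta + s)^{-1} E_D u = E_D u / (\mu + s)$. This would make $E_D u$ an $L^2(\R^d)$ eigenfunction of $-\Delta$, which is impossible.
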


The upper bound in~\eqref{eq:main} is well-known, see Theorem~3.3 in~\cite{cs:2}. Nevertheless, we provide a slightly different argument. The lower bound for $\psi(\xi) = \xi^{\alpha / 2}$ is given in Theorem~3 in~\cite{nazarov}, with a different proof.

\begin{theorem}[simplicity of eigenvalues in an interval]
\label{thm:interval}
Let $\psi$ be a complete Bernstein function and $D = (a, b)$ a finite interval in $\R$. Let $\lambda_n$ denote the Dirichlet eigenvalues of $\psi(-\Delta)$ in $D$, arranged in a nondecreasing order. Then
\formula[eq:interval]{
 \psi\bigl((\tfrac{(n - 1) \pi}{b - a})^2\bigr) & \le \lambda_n \le \psi\bigl((\tfrac{n \pi}{b - a})^2\bigr)
}
for $n = 1, 2, \ldots\,$ The inequalities are strict, and so $\lambda_n$ are simple, unless $\psi$ extends to a meromorphic function in $\C \setminus \{0\}$.
\end{theorem}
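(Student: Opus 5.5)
Theorem~\ref{thm:interval} follows by applying Theorem~\ref{thm:main} with $d = 1$ and $D = (a, b)$. The interval is a bounded Lipschitz domain, so the only additional input is the explicit spectrum of the one-dimensional Laplacian. The Dirichlet eigenfunctions of $-\Delta$ on $(a, b)$ are $\sin(n \pi (x - a)/(b - a))$ for $n = 1, 2, \ldots$, with eigenvalues $\mu_n^\dir = (\tfrac{n\pi}{b-a})^2$. The Neumann eigenfunctions are $\cos((n - 1)\pi (x - a)/(b - a))$ for $n = 1, 2, \ldots$, with eigenvalues $\mu_n^\neu = (\tfrac{(n-1)\pi}{b-a})^2$. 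All of these eigenvalues are simple, by the standard ODE argument: a second order equation with one boundary condition has a one-dimensional solution space. Substituting these values into~\eqref{eq:main} gives~\eqref{eq:interval} directly.

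Strictness is also inherited. If $\psi$ does not extend meromorphically to $\C \setminus \{0\}$, Theorem~\ref{thm:main} makes both inequalities in~\eqref{eq:main} strict, and hence
\formula{
 \psi\bigl((\tfrac{(n - 1) \pi}{b - a})^2\bigr) & < \lambda_n < \psi\bigl((\tfrac{n \pi}{b - a})^2\bigr) .
}

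For simplicity, write $I_n = (\psi(((n-1)\pi/(b-a))^2), \psi((n\pi/(b-a))^2))$ for the open interval containing $\lambda_n$. The right endpoint of $I_n$ equals the left endpoint of $I_{n+1}$, so the intervals $I_n$ are pairwise disjoint. This needs no monotonicity of $\psi$: consecutive intervals share an endpoint, and disjointness follows from the two strict inequalities alone. Suppose $\lambda_n = \lambda_{n+1}$ for some $n$. Then this common value would lie in both $I_n$ and $I_{n+1}$, which is impossible. The sequence $\lambda_n$ is listed with multiplicity, so every eigenvalue is simple.

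There is one degenerate case to handle. A complete Bernstein function could be constant, and then every interval $I_n$ is empty, so strictness would force a contradiction. However, a constant function is trivially entire, hence meromorphic on $\C \setminus \{0\}$, so it is excluded by the hypothesis of the strict part. Nonconstant complete Bernstein functions are strictly increasing on $(0, \infty)$, so each $I_n$ is nonempty and consistent with the inequalities.

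There is no real obstacle in this argument: all the analytic work, namely the equality analysis that produces strictness, is contained in Theorem~\ref{thm:main}. The only point requiring care is matching the indexing, since the Neumann spectrum begins with $\mu_1^\neu = 0$, which gives the lower bound $\psi(0)$ for $n = 1$.
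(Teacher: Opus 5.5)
Your proposal is correct and follows the paper's proof: substitute the explicit Dirichlet and Neumann eigenvalues of the interval into Theorem~\ref{thm:main} and inherit strictness from it. Your spelled-out simplicity step, $\lambda_n < \psi(\mu_n^\dir) = \psi(\mu_{n+1}^\neu) < \lambda_{n+1}$, is exactly what the paper leaves implicit; only consecutive indices matter there, so your claim that all the intervals $I_n$ are pairwise disjoint without using monotonicity of $\psi$ is more than you need.
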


Simplicity of the eigenvalues of the fractional Laplace operator $-(-\Delta)^{\alpha / 2}$ in an interval was proved, with a different argument, in Corollary~2 in~\cite{fgmp}; see~\cite{kwasnicki} for a very short version of that proof.

\begin{theorem}
\label{thm:ball}
Let $D = B(0, 1)$ be the unit ball in $\R^d$, $d \ge 2$, and $\psi$ be a complete Bernstein function. Fix a nonzero solid harmonic polynomial $P$ of degree $\ell$. Denote by $\mu_{\ell, n}^\neu$ and $\mu_{\ell, n}^\dir$ the Neumann and Dirichlet eigenvalues of $-\Delta$ in $D$ that correspond to the symmetry class of $P$ (that is, with eigenfunctions of the form $P(x) \ph(\lvert x \rvert)$), arranged in a nondecreasing order. Similarly, let $\lambda_{\ell,n}$ denote the Dirichlet eigenvalues of $\psi(-\Delta)$ in $D$ that correspond to the symmetry class of $P$, arranged in a nondecreasing order. Then
\formula[eq:ball]{
 \psi(\mu_{\ell, n}^\neu) & \le \lambda_{\ell, n} \le \psi(\mu_{\ell, n}^\dir)
}
for $n = 1, 2, \ldots\,$ The inequalities are strict unless $\psi$ extends to a meromorphic function in $\C \setminus \{0\}$.
\end{theorem}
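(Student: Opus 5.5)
The plan is to deduce Theorem~\ref{thm:ball} from the mechanism behind Theorem~\ref{thm:main}, restricted to one symmetry class. The two operator inequalities that give~\eqref{eq:main} are
\formula{
 \psi(-\Delta^\neu) \le (\psi(-\Delta))^\dir \le \psi(-\Delta^\dir)
}
in the sense of quadratic forms. The first is the lower bound; the second is the (known) upper bound. Both are stated on $L^2(D)$. We will need them, together with the strictness analysis, on the closed subspace
\formula{
 H_P = \{P(x) \ph(\lvert x \rvert)\} \subseteq L^2(D).
}

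\textbf{Step 1: the relevant operators preserve $H_P$.} Every rotation $R$ of $\R^d$ maps $D = B(0,1)$ onto itself. Hence the unitary operator $U_R f = f \circ R^{-1}$ commutes with $\Delta^\dir$, with $\Delta^\neu$, and with $(\psi(-\Delta))^\dir$. The latter holds because its quadratic form
\formula{
 \int \psi(\lvert\xi\rvert^2)\, \lvert \fourier f(\xi)\rvert^2\, d\xi ,
}
restricted to functions vanishing outside $D$, is rotation invariant. It follows that all three operators commute with the orthogonal projections onto the isotypic components of $L^2(D)$ under the rotation group $SO(d)$; when $d = 2$ we use $O(2)$ instead.

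Next, we check that each of these operators, restricted to an isotypic component, acts as a single scalar operator on the radial profile $\ph$. This can be done with the Funk--Hecke / Bochner identity: the Fourier transform of $P(x) \ph(\lvert x\rvert)$ is $P(\xi)\,\Phi(\lvert\xi\rvert)$, with $\Phi$ a Hankel transform of $\ph$ of order $\ell + d/2 - 1$. Consequently each operator commutes with the unitary map $P \ph \mapsto P' \ph$ for any other harmonic polynomial $P'$ of degree $\ell$. In particular, each operator leaves $H_P$ invariant, and its restriction to $H_P$ has compact resolvent.

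\textbf{Step 2: bracketing on $H_P$.} The form inequalities above remain valid for the restrictions of the forms to $H_P$. Applying min-max on $H_P$ then gives the non-strict bounds in~\eqref{eq:ball}. Here we use that $\psi(-\Delta^\neu)$ restricted to $H_P$ equals $\psi$ applied to $(-\Delta^\neu)$ restricted to $H_P$, which holds because the functional calculus commutes with reducing subspaces. The eigenvalues of $(-\Delta^\neu)|_{H_P}$ are exactly $\mu_{\ell,n}^\neu$, and likewise in the Dirichlet case.

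\textbf{Step 3: strictness, the main obstacle.} Strictness must be established inside $H_P$, not only in $L^2(D)$. For this I would reuse the equality argument from the proof of Theorem~\ref{thm:main}. That argument presumably shows the following: if $\psi$ has no meromorphic extension to $\C \setminus \{0\}$, then equality in the resolvent comparison can hold for no nonzero $f$. In min-max terms, an eigenvalue coincidence such as $\lambda_{\ell,n} = \psi(\mu_{\ell,n}^\dir)$ would produce some nonzero function $f$ in $H_P$ realising equality. That function $f$ is in particular an element of $L^2(D)$, so the pointwise impossibility statement from Theorem~\ref{thm:main} rules it out, and the inequality is strict.

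Thus the key check is that the strictness mechanism in Theorem~\ref{thm:main} is local in this sense. It should have the form ``equality for a particular $f$ forces $\psi$ to be meromorphic'', rather than relying on global counting of eigenvalues. If it instead uses a global counting argument, I would redo that argument verbatim with $L^2(D)$ replaced by the invariant subspace $H_P$. Every ingredient involved (spectral measures, operator monotonicity of resolvents, and the Stieltjes representation of $\psi$) passes to reducing subspaces.
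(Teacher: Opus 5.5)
Your proposal is correct and is essentially the paper's proof: restrict to the reducing subspace of functions $P(x)\ph(\lvert x\rvert)$, get the weak bounds from the form inequalities and Courant--Fischer there, and obtain strictness because the equality argument in Theorem~\ref{thm:main} is pointwise. A single nonzero $u \in L^2(D)$ with equal subordinate forms already yields a nontrivial invariant subspace of $-\Delta$ inside $L^2(D)$, which is why the paper can say that argument ``applies verbatim''; one small slip is that when $\psi(\infty) < \infty$ the restriction of $(\psi(-\Delta))^\dir$ to $H_P$ does not have compact resolvent, but min-max still gives the eigenvalues below the essential spectrum $\{\psi(\infty)\}$, so the argument is unaffected.
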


\begin{corollary}[antisymmetry of the second eigenfunction in a ball]
\label{cor:shape}
Under the assumptions of Theorem~\ref{thm:ball}, we have
\formula[eq:shape]{
 \lambda_{1, 1} \le \psi(\mu_{1, 1}^\dir) = \psi(\mu_{0, 2}^\neu) \le \lambda_{0, 2} .
}
Hence, the second smallest eigenvalue of $\psi(-\Delta)$ in the unit ball (counting multiplicities) corresponds to an antisymmetric eigenfunction. Unless $\psi$ extends to a meromorphic function in $\C \setminus \{0\}$, the inequalities in~\eqref{eq:shape} are strict, and every eigenfunction corresponding to the second smallest eigenvalue is antisymmetric.
\end{corollary}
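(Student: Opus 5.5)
The plan is to read off Corollary~\ref{cor:shape} from Theorem~\ref{thm:ball} together with a classical identity for the Laplacian in the ball. The middle equality $\mu_{1,1}^\dir = \mu_{0,2}^\neu$ is a statement only about $-\Delta$. Radial Neumann eigenfunctions have the form $u(x) = |x|^{1-d/2} J_{d/2-1}(\sqrt{\mu}\,|x|)$, and the Neumann condition reads $\frac{d}{dr}\bigl(r^{1-d/2}J_{d/2-1}(\sqrt{\mu} r)\bigr)|_{r=1} = 0$. The standard identity $\frac{d}{dz}\bigl(z^{-\nu}J_\nu(z)\bigr) = -z^{-\nu}J_{\nu+1}(z)$ with $\nu = d/2-1$ turns this into $J_{d/2}(\sqrt{\mu}) = 0$. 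The value $\mu=0$ is excluded here, since it is the constant eigenfunction $\mu_{0,1}^\neu$. On the other hand, Dirichlet eigenfunctions in the class of degree one have the form $x_1 |x|^{-d/2} J_{d/2}(\sqrt{\mu}|x|)$, and the Dirichlet condition is again $J_{d/2}(\sqrt\mu)=0$.

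Consequently, $\mu_{0,n+1}^\neu = \mu_{1,n}^\dir = j_{d/2,n}^2$ for all $n$, and in particular $\mu_{1,1}^\dir = \mu_{0,2}^\neu$. Equivalently, one can note that $\partial_1$ maps radial Neumann eigenfunctions to degree-one Dirichlet eigenfunctions and preserves the eigenvalue. Since $\psi$ is a function, this gives the middle equality in~\eqref{eq:shape}. The outer inequalities are exactly the cases $(\ell,n)=(1,1)$ (upper bound) and $(\ell,n)=(0,2)$ (lower bound) of~\eqref{eq:ball}. They are strict under the non-meromorphy assumption, again by Theorem~\ref{thm:ball}.

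To identify the second eigenvalue, I use the decomposition of $L^2(B(0,1))$ into the symmetry classes $P(x)\ph(|x|)$, which is invariant under $\psi(-\Delta)^\dir$ by rotation invariance. The full spectrum is then the union of the $\lambda_{\ell,n}$ counted with multiplicity. The smallest eigenvalue is $\lambda_{0,1}$: it is simple, with a positive radial eigenfunction, because the semigroup is positivity improving. It remains to show $\lambda_{1,1} \le \lambda_{\ell,n}$ for every $(\ell,n) \ne (0,1)$. For $\ell=0$, $n\ge2$ this follows from~\eqref{eq:shape} and monotonicity in $n$. For $\ell \ge 1$ I would use monotonicity in $\ell$, namely $\lambda_{\ell,1} \le \lambda_{\ell+1,1}$ or at least $\lambda_{1,1}\le\lambda_{\ell,1}$. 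This does not follow from the bracketing alone. The cleanest route is the lower bound $\lambda_{\ell,1} \ge \psi(\mu_{\ell,1}^\neu)$ combined with the Bessel fact $\mu_{\ell,1}^\neu \ge \mu_{1,1}^\dir$ for $\ell\ge2$. For $\ell \ge 2$ the first Neumann eigenvalue $j'^2_{\ell+d/2-1,1}$ of the class exceeds $j_{d/2,1}^2$, by interlacing $j'_{\nu,1} > \nu$ together with $j_{d/2,1} < d/2+1 \le \ell+d/2-1$. I would check this last numerical step carefully; it is the main obstacle. Otherwise I would fall back on a known monotonicity in $\ell$ for radial-type operators. Since $\psi$ is increasing, these bounds give $\lambda_{\ell,1} \ge \lambda_{1,1}$.

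Finally, under the non-meromorphy assumption, strict inequality in~\eqref{eq:shape} gives $\lambda_{1,1} < \lambda_{0,2}$. For $\ell\ge2$ strictness comes from the strict lower bound of Theorem~\ref{thm:ball}, so $\lambda_{\ell,1} > \psi(\mu_{\ell,1}^\neu)\ge \psi(\mu_{1,1}^\dir) \ge \lambda_{1,1}$. Hence the eigenspace of the second eigenvalue lies entirely in the class $\ell=1$, whose functions $P(x)\ph(|x|)$ with $P$ linear are antisymmetric.
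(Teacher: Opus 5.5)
Your derivation of \eqref{eq:shape} is correct: $\mu_{0,n+1}^\neu = \mu_{1,n}^\dir = j_{d/2,n}^2$ holds, and the outer inequalities (strict under non-meromorphy) are just Theorem~\ref{thm:ball}. The gap is in excluding the classes $\ell \ge 2$. The inequality $\mu_{\ell,1}^\neu \ge \mu_{1,1}^\dir$ that you want is false for $\ell = 2$ in every dimension $d \ge 2$.

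\begin{itemize}
\item For $d = 2$: $\mu_{2,1}^\neu = (j'_{2,1})^2 \approx 3.054^2$, while $\mu_{1,1}^\dir = j_{1,1}^2 \approx 3.832^2$.
\item In general, $t^{1-d/2} J_{d/2+1}(t)$ has derivative $t^{-d/2}\bigl(t J_{d/2}(t) - d\, J_{d/2+1}(t)\bigr)$. The bracket is positive near $t = 0$ and equals $-d\, J_{d/2+1}(j_{d/2,1}) < 0$ at $t = j_{d/2,1}$. Hence $\mu_{2,1}^\neu < j_{d/2,1}^2 = \mu_{1,1}^\dir$.
\end{itemize}

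The numerical step you flagged also fails: $j_{d/2,1} < d/2 + 1$ is false, e.g.\ $j_{1,1} \approx 3.83 > 2$. Moreover, $(j'_{\ell+d/2-1,1})^2$ is the first Neumann eigenvalue of the class only when $d = 2$. For general $d$ it is the square of the first critical point of $t^{1-d/2}J_{\ell+d/2-1}(t)$. The bracketing intervals $[\psi(\mu_{2,1}^\neu), \psi(\mu_{2,1}^\dir)]$ and $[\psi(\mu_{1,1}^\neu), \psi(\mu_{1,1}^\dir)]$ therefore overlap. So Theorem~\ref{thm:ball} alone cannot rule out $\lambda_{2,1} < \lambda_{1,1}$. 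Both your weak conclusion and your strict chain $\lambda_{\ell,1} > \psi(\mu_{\ell,1}^\neu) \ge \psi(\mu_{1,1}^\dir)$ break down at $\ell = 2$.

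The missing ingredient is monotonicity of $\lambda_{\ell,n}$ in $\ell$. The paper proves it directly rather than citing it as ``known'':
\begin{enumerate}
\item On the common space $L^2((0,\infty), r^{d-1}\,dr)$, the quadratic forms of $\tilde L_{d,\ell+1}$ and $\tilde L_{d,\ell}$ differ by $(d+2\ell-1)\int_0^\infty \lvert f(r)\rvert^2 r^{d-3}\,dr \ge 0$. Thus $\tilde L_{d,\ell} \le \tilde L_{d,\ell+1}$.
\item Lemma~\ref{lem:om}, via \eqref{eq:form:balakrishnan} and Lemma~\ref{lem:rm}, transfers this to $\psi(\tilde L_{d,\ell}) \le \psi(\tilde L_{d,\ell+1})$.
\item Restricting to functions vanishing on $[1,\infty)$ preserves the inequality, and Courant--Fischer gives $\lambda_{\ell,n} \le \lambda_{\ell+1,n}$.
\item The difference form is strictly positive on nonzero $f$. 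So Remark~\ref{rem:rm:strict} and \eqref{eq:form:balakrishnan} make the subordinate inequality strict whenever $\psi$ is non-constant. This gives $\lambda_{1,1} < \lambda_{\ell,1}$ for $\ell \ge 2$, which is exactly what your final claim about the whole second eigenspace needs.
\end{enumerate}
With this in place, only $\lambda_{0,2}$ and $\lambda_{1,1}$ compete for second place, and your comparison \eqref{eq:shape} finishes the proof as in the paper.
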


Antisymmetry of the second eigenfunction of the fractional Laplace operator $-(-\Delta)^{\alpha / 2}$ in a ball was proved, using different methods, in Theorem~1.2 in~\cite{fftw}, and in Theorem~1.1 in~\cite{bbdg}.

To our knowledge, the Neumann lower bound and the sufficient condition for strict bracketing in Theorems~\ref{thm:main} and~\ref{thm:ball} are new for general complete Bernstein functions. They yield the extensions of interval simplicity and ball antisymmetry stated in Theorem~\ref{thm:interval} and Corollary~\ref{cor:shape}.

\begin{remark}
For the upper bound in~\eqref{eq:main}, the Laplace operator $-\Delta$ in the above result can be replaced by an arbitrary self-adjoint operator $L$ associated to a regular Dirichlet form $\form$, and $D$ can be an arbitrary nonempty open set; see~\cite{cs:2} for further discussion. In this case, the operators $L$ and $\psi(L)$ with Dirichlet boundary/exterior conditions correspond to the closures of restrictions of the quadratic forms of $L$ and $\psi(L)$ to the class of continuous functions with a compact support contained in $D$.

The lower bound in~\eqref{eq:main} is also quite general. It extends to a wider class of operators $L$, which are associated to regular Dirichlet forms $\form$. The quadratic form of the corresponding operator $L^\neu$ on $D$ with Neumann boundary conditions is then defined by restricting the domain of integration in the Beurling--Deny representation of $\form$ to $D$. Our methods apply if the quadratic form $\form^\neu$ of $L^\neu$ defined above is closed, $\form^\neu \le \form$, $L^2(D)$ is an invariant space for $L^\neu$, and the resolvent operators of $L^\neu$ are compact on $L^2(D)$.

Strict inequalities in~\eqref{eq:main} depend on the following property of $\Delta$: there is no nontrivial closed subspace of $L^2(\R^d)$ which is invariant for the resolvent of $\Delta$ and which consists of functions vanishing in the complement of $D$. Our proof applies to more general operators $L$ with this property.
\end{remark}

\begin{remark}
In~\cite{cs:1}, Caffarelli and Silvestre proved that the fractional Laplace operator $(-\Delta)^{\alpha / 2}$ is the Dirichlet-to-Neumann map for an appropriate second-order differential operator $\mathscr L$ in the half-space $\R^d \times [0, \infty)$. In the same vein, the fractional Dirichlet and Neumann Laplace operators $(-\Delta^\dir)^{\alpha / 2}$ and $(-\Delta^\neu)^{\alpha / 2}$ are the Dirichlet-to-Neumann maps for the same differential operator $\mathscr L$ in $D \times [0, \infty)$, with a Dirichlet or Neumann boundary condition imposed on $(\partial D) \times (0, \infty)$. The Dirichlet--Neumann bracketing~\eqref{eq:main} follows in this case from a simple comparison of the quadratic forms of $\mathscr L$ in these three settings: on $\R^d \times [0, \infty)$, on $D \times [0, \infty)$ with the Dirichlet boundary condition, and on $D \times [0, \infty)$ with the Neumann boundary condition. Furthermore, strict inequalities can be obtained using the unique continuation property. This approach was taken in~\cite{mn:1,mn:2,nazarov}.

When $\psi$ is an arbitrary complete Bernstein function, a similar description of the operator $\psi(-\Delta)$ as the Dirichlet-to-Neumann map for a (possibly singular, generalised) second-order differential operator $\mathscr L$ in the half-space $\R^d \times [0, \infty)$ was found in~\cite{km}. It is quite clear that $\psi(-\Delta^\dir)$ and $\psi(-\Delta^\neu)$ admit analogous representations (see~\cite{atw,ah,gms,lee,st,tan}). Thus, an argument sketched in the previous paragraph likely leads to an alternative proof of weak inequalities in Theorem~\ref{thm:main}. However, as we expect various technical difficulties related to highly singular differential operators $\mathscr L$ involved, we decided to take a different approach.
\end{remark}

\begin{remark}
Probabilistically, $\Delta$ is the generator of the Brownian motion in $\R^d$ (with covariance matrix $2 t I$), while $\Delta^\dir$ and $\Delta^\neu$ are the generators of the killed and reflected Brownian motions in $D$, respectively. The operators $-\psi(-\Delta)$, $-\psi(-\Delta^\dir)$ and $-\psi(-\Delta^\neu)$ are the generators of subordinate processes, where $\psi$ is the Laplace exponent of the corresponding (complete) subordinator (killed at the uniform rate $\psi(0)$ if $\psi(0) > 0$). Finally, $-(\psi(-\Delta))^\dir$ is the generator of the killed subordinate process.
\end{remark}

%
%

\section{Preliminaries}

Throughout this work, $d = 1, 2, \ldots$ is a dimension. By $\fourier$ we denote the Fourier transform on $L^2(\R^d)$, that is, the continuous extension to $L^2(\R^d)$ of the operator initially defined for $u \in L^2(\R^d) \cap L^1(\R^d)$ by
\formula{
 \fourier u(\xi) & = \int_{\R^d} e^{-i \xi \cdot x} u(x) dx .
}
Below we give rigorous definitions of operators discussed in the introduction and collect auxiliary results.


\subsection{Quadratic forms}

We consider a possibly unbounded nonnegative definite self-adjoint operator $L$ on a closed subspace $H_L$ of a Hilbert space $H$. Let $\form$ be the corresponding nonnegative, closed, lower semi-continuous quadratic form on $H$. Denote by $\dom(L)$ and $\dom(\form)$ the domains of $L$ and $\form$, respectively. For $u \in \dom(L)$, we have
\formula{
 \form(u) & = \langle L u, u \rangle .
}
Since $L$ is a possibly unbounded nonnegative definite self-adjoint operator on the Hilbert space $H_L = \Cl \dom(L) = \Cl \dom(\form)$, we can define the square root operator $L^{1 / 2}$ on $H_L$ by means of the spectral theorem. Then $\dom(\form)$ coincides with $\dom(L^{1 / 2})$, and for $u \in \dom(\form)$,
\formula{
 \form(u) & = \lVert L^{1 / 2} u \rVert^2 .
}
We define $\form(u) = \infty$ when $u \in H \setminus \dom(\form)$. The correspondence between $L$ and $\form$ is one-to-one. These and many more standard properties of self-adjoint operators and their associated quadratic forms can be found, for example, in~\cite{ouhabaz}.

If $s > 0$, then the resolvent operator $(L + s)^{-1}$ is a bounded self-adjoint operator on $H_L$, with operator norm at most $s^{-1}$; see Proposition~1.22 in~\cite{ouhabaz}. We extend $(L + s)^{-1}$ to a bounded self-adjoint operator on $H$ so that $(L + s)^{-1} u = 0$ for $u$ in the orthogonal complement of $H_L$.

Let $L_1$ and $L_2$ be possibly unbounded nonnegative definite self-adjoint operators on closed subspaces $H_1$ and $H_2$ of a Hilbert space $H$. Denote the corresponding quadratic forms on $H$ by $\form_1$ and $\form_2$. We write $L_1 \le L_2$ if $\form_1 \le \form_2$ on $H$. The latter property implies that $\dom(\form_2) \subseteq \dom(\form_1)$.

\begin{lemma}[Proposition~3.1 in~\cite{vv}; Lemma~3.2 in~\cite{hssw}]
\label{lem:rm}
If $L_1$ and $L_2$ are possibly unbounded nonnegative definite self-adjoint operators on closed subspaces of a given Hilbert space, then
\formula{
 & L_1 \le L_2 \text{ implies } (L_1 + s)^{-1} \ge (L_2 + s)^{-1} \text{ for every } s > 0 .
}
\end{lemma}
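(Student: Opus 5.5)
The proof reduces the resolvent comparison to a variational characterisation of the quadratic form of the resolvent. Fix $s > 0$ and write $R_j = (L_j + s)^{-1}$, extended by zero to $H_j^\perp$ as in the convention above. For every $f \in H$ I will establish
\formula[eq:var]{
 \langle R_j f, f \rangle & = \sup_{v \in \dom(\form_j)} \bigl( 2 \operatorname{Re} \langle f, v \rangle - \form_j(v) - s \lVert v \rVert^2 \bigr) .
}
Once \eqref{eq:var} holds, the lemma is immediate. The hypothesis $\form_1 \le \form_2$ on $H$ gives $\dom(\form_2) \subseteq \dom(\form_1)$. Every competitor in the supremum for $j = 2$ is therefore admissible for $j = 1$, and its value there is at least as large. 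Hence $\langle R_1 f, f \rangle \ge \langle R_2 f, f \rangle$ for all $f \in H$, which is exactly $R_1 \ge R_2$ as bounded self-adjoint operators on $H$.

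To prove \eqref{eq:var}, let $P_j$ be the orthogonal projection onto $H_j$. Since $\dom(\form_j) \subseteq H_j$, we have $\langle f, v \rangle = \langle P_j f, v \rangle$ for every admissible $v$, and $\langle R_j f, f \rangle = \langle R_j P_j f, P_j f \rangle$. This reduces the claim to $f \in H_j$, where the operator $L_j + s$ acts genuinely on $H_j$. Let $\form_{j,s}(v,w)$ denote the sesquilinear form of $\form_j + s \lVert \cdot \rVert^2$. Put $u = R_j f \in \dom(L_j) \subseteq \dom(\form_j)$. For $v \in \dom(\form_j)$ we have $\form_{j,s}(u, v) = \langle (L_j + s) u, v \rangle = \langle f, v \rangle$, the standard link between a form and its operator. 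Completing the square then gives
\formula{
 2 \operatorname{Re} \langle f, v \rangle - \form_{j,s}(v, v) & = \form_{j,s}(u, u) - \form_{j,s}(v - u, v - u) ,
}
where $\form_{j,s}(u,u) = \langle f, u \rangle = \langle R_j f, f \rangle$. Since $\form_{j,s}(v-u,v-u) \ge 0$, the supremum equals $\langle R_j f, f \rangle$ and is attained at $v = u$.

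The main point requiring care is the bookkeeping with the different subspaces $H_1 \ne H_2$ and the zero extension of the resolvents. This is why I state the variational formula for every $f \in H$, not only for $f \in H_j$, and pass through the projection $P_j$. An alternative route would approximate $\form_j$ by forms on all of $H$, for instance $\form_j + n \lVert (I - P_j) \cdot \rVert^2$, and take strong resolvent limits. The variational formula avoids that extra limiting step.
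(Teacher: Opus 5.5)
Your proof is correct, but it takes a different route from the paper. The paper only sketches the case $H_1 = H_2$: it shows that $(L_1 + s)^{1/2} (L_2 + s)^{-1/2}$ is a contraction and then passes to its adjoint. It defers the case of different subspaces to Proposition~3.1 in~\cite{vv} and Lemma~3.2 in~\cite{hssw}, even though that is the case used later, e.g.\ for $L \le L^\dir$ with $L^\dir$ acting on $L^2(D)$.

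You instead characterise $\langle (L_j + s)^{-1} f, f \rangle$ as the maximum of $2 \operatorname{Re} \langle f, v \rangle - \form_j(v) - s \lVert v \rVert^2$ over $v \in \dom(\form_j)$, which is a Dirichlet principle. Monotonicity in the form then follows at once, and the zero extension on $H_j^\perp$ costs only the projection bookkeeping you describe. Your argument is therefore self-contained in exactly the generality the paper needs, with no square roots or adjoints.

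It also gives more. The maximiser $(L_j + s)^{-1} f$ is unique because $s > 0$. Testing the functional for $j = 1$ at $v = (L_2 + s)^{-1} f$ then yields the strict inequality of Remark~\ref{rem:rm:strict} directly. Likewise, equality $\langle (L_1 + s)^{-1} f, f \rangle = \langle (L_2 + s)^{-1} f, f \rangle$ forces $(L_1 + s)^{-1} f = (L_2 + s)^{-1} f$. This is the step in the proof of Lemma~\ref{lem:om:strict} where the paper instead appeals to nonnegativity of the difference of resolvents.

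The paper's contraction argument, for its part, is the standard textbook proof and is slightly quicker when $H_1 = H_2$.
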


\begin{proof}[Sketch of the proof]
When $H_1 = H_2$, this is a standard result, usually proved by showing that $(L_1 + s)^{1 / 2} (L_2 + s)^{-1 / 2}$ is a contraction on $H_1 = H_2$:
\formula{
 \lVert (L_1 + s)^{1 / 2} (L_2 + s)^{-1 / 2} u \rVert^2 & = \form_1((L_2 + s)^{-1 / 2} u) + s \lVert (L_2 + s)^{-1 / 2} u \rVert^2 \\
 & \le \form_2((L_2 + s)^{-1 / 2} u) + s \lVert (L_2 + s)^{-1 / 2} u \rVert^2 \\
 & = \lVert (L_2 + s)^{1 / 2} (L_2 + s)^{-1 / 2} u \rVert^2 = \lVert u \rVert^2 .
}
It follows that also the adjoint operator $((L_1 + s)^{1 / 2} (L_2 + s)^{-1 / 2})^*$ is a contraction. Hence, with $v = (L_1 + s)^{-1 / 2} u$,
\formula{
 \langle (L_2 + s)^{-1} u, u \rangle & = \lVert (L_2 + s)^{-1/2} u \rVert^2 \\
 & = \lVert (L_2 + s)^{-1/2} (L_1 + s)^{1 / 2} v \rVert^2 \\
 & = \lVert ((L_1 + s)^{1 / 2} (L_2 + s)^{-1 / 2})^* v \rVert^2 \\
 & \le \lVert v \rVert^2 = \lVert (L_1 + s)^{-1 / 2} u \rVert^2 = \langle (L_1 + s)^{-1} u, u \rangle .
}
The more general statement, where $H_1$ need not be equal to $H_2$, is technically more demanding. It is a special case of Proposition~3.1 in~\cite{vv} or Lemma~3.2 in~\cite{hssw}, and we omit the proof.
\end{proof}

\begin{remark}
\label{rem:rm:strict}
Under the assumptions of Lemma~\ref{lem:rm}, if $H_1 = H_2$ and we additionally have a strict inequality $\form_1(u) < \form_2(u)$ for all nonzero $u \in \dom(\form_1) \cap \dom(\form_2)$, then the above proof shows that $\langle (L_2 + s)^{-1} u, u \rangle < \langle (L_1 + s)^{-1} u, u \rangle$ for every nonzero $u \in H_1 = H_2$.

Written more compactly, but with a slight abuse of notation: if $H_1 = H_2$ and $L_1 < L_2$ on $H_1 = H_2$, then $(L_1 + s)^{-1} > (L_2 + s)^{-1}$ on $H_1 = H_2$.
\end{remark}

Recall that a subspace $H_0$ of $L^2(\R^d)$ is said to be invariant for $L$, or a reducing subspace for $L$, if the resolvent operators $(s + L)^{-1}$ map $H_0$ to $H_0$ for every $s \in \C \setminus (-\infty, 0]$. In particular, $H_L$ and its orthogonal complement are invariant for $L$.


\subsection{The Courant--Fischer theorem}

As before, let $L$ be a possibly unbounded nonnegative definite self-adjoint operator on a closed subspace $H_L$ of a separable Hilbert space $H$, and let $\form$ denote the corresponding quadratic form. Suppose that the spectrum of $L$ is purely discrete. Then the $n$th smallest eigenvalue $\mu_n$ (counting multiplicities) is given by the Courant--Fischer min-max formula:
\formula*[eq:cf]{
 \mu_n & = \min \bigl\{ \max \{\form(u) \, : \, u \in E , \, \lVert u \rVert_2^2 = 1\} \; : \\
 & \qquad\qquad \text{$E$ is an $n$-dimensional subspace of $H$} \bigr\} .
}
Of course, we can replace $H$ by $\dom(\form)$ in~\eqref{eq:cf}. If the dimension of $\dom(\form)$ is less than $n$, we leave $\mu_n$ undefined.

Let $\ph_n$ be the corresponding orthonormal eigenvectors: $L \ph_n = \mu_n \ph_n$. The minimum in~\eqref{eq:cf} is attained when $E$ is the linear span of $\ph_1, \ph_2, \ldots, \ph_n$. Conversely, if the minimum in~\eqref{eq:cf} is attained for $E$, then $E$ contains an eigenvector with eigenvalue $\mu_n$. We refer to~\cite{ch,rs:4} for further discussion, variants, and applications.

Suppose that $L_1$, $L_2$ are as above, and denote by $\form_1$, $\form_2$ the corresponding quadratic forms, by $\mu_{\smash{n}}^{(1)}$, $\mu_{\smash{n}}^{(2)}$ the corresponding eigenvalues arranged in a nondecreasing order. By~\eqref{eq:cf}, clearly
\formula{
 L_1 & \le L_2 \text{ implies } \mu_n^{(1)} \le \mu_n^{(2)} \text{ for } n = 1, 2, \ldots
}
Furthermore, if $\mu_{\smash{n}}^{(1)} = \mu_{\smash{n}}^{(2)}$ for some $n$, then there is a common eigenvector of $L_1$ and $L_2$ with eigenvalue $\mu_{\smash{n}}^{(1)} = \mu_{\smash{n}}^{(2)}$.


\subsection{Complete Bernstein functions and subordinate operators}

A function $\psi : [0, \infty) \to [0, \infty)$ is said to be a complete Bernstein function if
\formula[eq:cbf]{
 \psi(\xi) & = c_1 \xi + c_0 + \frac{1}{\pi} \int_{(0, \infty)} \frac{\xi}{\xi + s} \, \mu(ds) ,
}
where $c_1, c_0 \ge 0$ and $\mu$ is a nonnegative Borel measure such that the above integral is finite for some (or, equivalently, for all) $\xi > 0$; see Chapters~7 and~8 in~\cite{ssv}. Note that~\eqref{eq:cbf} defines a holomorphic function of $\xi \in \C \setminus (-\infty, 0]$. We will need the following simple observation: $\psi$ extends further to a meromorphic function on $\C \setminus \{0\}$ if and only if $\mu$ is a purely atomic measure on $(0, \infty)$, with atoms forming a discrete subset of $(0, \infty)$. The latter condition means that the support of $\mu$ possibly contains sequences convergent to $0$ or at $\infty$, but it has no accumulation points in $(0, \infty)$.

If $L$ is a nonnegative self-adjoint possibly unbounded operator on a closed subspace $H_L$ of a Hilbert space $H$, then the subordinate operator $L^\sub = \psi(L)$ on $H_L$ can be defined by means of the spectral theorem. If $\psi$ has a finite limit $\psi(\infty)$ at infinity, we extend the definition of $L^\sub$ to $H$ by setting $L^\sub u = \psi(\infty) u$ for $u$ in the orthogonal complement of $H_L$; otherwise, we leave $L^\sub$ only defined on a subspace of $H_L$.

An equivalent definition of $L^\sub$ involves the resolvent operators $(L + s)^{-1}$: we have
\formula[eq:balakrishnan]{
 L^\sub u & = c_1 L u + c_0 u + \frac{1}{\pi} \int_{(0, \infty)} \bigl( u - s (L + s)^{-1} u \bigr) \mu(ds) .
}
The domain of $L^\sub$ is the same as the domain of $L$ if $c_1 > 0$. Otherwise, $\dom(L^\sub)$ contains $\dom(L)$, and~\eqref{eq:balakrishnan} holds on $\dom(L)$, with an absolutely convergent Bochner's integral on the right-hand side. The above formula is an extension of Balakrishnan's representation of fractional powers of operators; see Chapter~3 in~\cite{ms} or Section~13.2 in~\cite{ssv}. Since $\xi / (\xi + s) = 1 - s (\xi + s)^{-1}$, formula~\eqref{eq:balakrishnan} is a perfect analogue of~\eqref{eq:cbf}. 

We will not need~\eqref{eq:balakrishnan} in this paper, but we will use an equivalent expression for the quadratic form $\form^\sub$ associated to the subordinate operator $L^\sub$:
\formula[eq:form:balakrishnan]{
 \form^\sub(u) & = c_1 \form(u) + c_0 \lVert u \rVert^2 + \frac{1}{\pi} \int_{(0, \infty)} \bigl( \lVert u \rVert^2 - \langle s (L + s)^{-1} u, u \rangle \bigr) \mu(ds) .
}
Here we agree $0 \cdot \infty = 0$ when $c_1 = 0$ and $\form(u) = \infty$. The corresponding domain $\dom(\form^\sub)$ is equal to $\dom(\form)$ if $c_1 > 0$, and otherwise it consists of those $u \in H$ for which the above integral is finite.

Equivalence of~\eqref{eq:form:balakrishnan} and the spectral-theoretic definition of the quadratic form of $L^\sub$ is likely well-known, but difficult to find in the literature, so let us sketch the proof. On $H_L$, formula~\eqref{eq:form:balakrishnan} follows directly from the spectral theorem. If $\psi(\infty) = \infty$, then both sides of~\eqref{eq:form:balakrishnan} are infinite on the complement of $H_L$. Otherwise, if $\psi(\infty)$ is finite, the validity of~\eqref{eq:form:balakrishnan} on the orthogonal complement of $H_L$ is a direct consequence of the definition. Finally, if $u = u_1 + u_2$, where $u_1$ is in $H_L$ and $u_2$ is in the orthogonal complement of $H_L$, then $\form^\sub(u) = \form^\sub(u_1) + \form^\sub(u_2)$ (because $H_L$ and its orthogonal complement are invariant subspaces for $L^\sub$), and the right-hand side of~\eqref{eq:form:balakrishnan} is easily seen to have a similar additivity property.

It is well-known that the class of complete Bernstein functions coincides with the class of operator monotone functions. That is, if $L_1$ and $L_2$ are nonnegative definite self-adjoint matrices and $\psi$ is a complete Bernstein function, then
\formula{
 & L_1 \le L_2 \text{ implies } \psi(L_1) \le \psi(L_2) .
}
Conversely, if a continuous function $\psi : [0, \infty) \to [0, \infty)$ has the above property for all nonnegative definite self-adjoint matrices $L_1, L_2$, then $\psi$ is a complete Bernstein function. Operator monotonicity extends to self-adjoint operators on Hilbert spaces; see Chapter~14 in~\cite{ssv} for a detailed discussion. The following simple lemma extends operator monotonicity to our more general setting of self-adjoint operators on different closed subspaces of a given Hilbert space $H$.

\begin{lemma}
\label{lem:om}
If $L_1, L_2$ are nonnegative definite possibly unbounded self-adjoint operators on closed subspaces of a Hilbert space $H$ and $\psi$ is a complete Bernstein function, then
\formula{
 & L_1 \le L_2 \text{ implies } L_1^\sub \le L_2^\sub .
}
\end{lemma}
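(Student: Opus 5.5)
The plan is to compare the two sides of \eqref{eq:form:balakrishnan} for $L_1$ and $L_2$ term by term. Let $\form_1, \form_2$ be the quadratic forms of $L_1, L_2$, and $\form_1^\sub, \form_2^\sub$ those of $L_1^\sub, L_2^\sub$. We must show $\form_1^\sub(u) \le \form_2^\sub(u)$ for every $u \in H$, with the convention that both sides may equal $\infty$. By \eqref{eq:form:balakrishnan}, for $j = 1, 2$,
\formula{
 \form_j^\sub(u) & = c_1 \form_j(u) + c_0 \lVert u \rVert^2 + \frac{1}{\pi} \int_{(0, \infty)} \bigl( \lVert u \rVert^2 - \langle s (L_j + s)^{-1} u, u \rangle \bigr) \mu(ds) .
}
Here the resolvents $(L_j + s)^{-1}$ are extended by zero to the orthogonal complements of $H_{L_j}$, exactly as in the preliminaries.

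First, the term $c_1 \form_1(u) \le c_1 \form_2(u)$ holds directly from the hypothesis $\form_1 \le \form_2$. This includes the degenerate cases: when $c_1 = 0$ we use the convention $0 \cdot \infty = 0$, and when $c_1 > 0$ and $\form_2(u) = \infty$ there is nothing to prove. The term $c_0 \lVert u \rVert^2$ is the same on both sides.

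Second, for the integral we invoke Lemma~\ref{lem:rm}. It gives $\langle (L_1 + s)^{-1} u, u \rangle \ge \langle (L_2 + s)^{-1} u, u \rangle$ for every $s > 0$, where the resolvents are understood on all of $H$ with the zero extension. This is exactly the general version with possibly different subspaces $H_1 \ne H_2$, which Lemma~\ref{lem:rm} covers. Consequently, for each $s > 0$,
\formula{
 \lVert u \rVert^2 - \langle s (L_1 + s)^{-1} u, u \rangle & \le \lVert u \rVert^2 - \langle s (L_2 + s)^{-1} u, u \rangle .
}
Both integrands are nonnegative, since $\lVert s (L_j + s)^{-1} \rVert \le 1$. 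Integrating this inequality against $\mu$ therefore preserves it, including when the integrals are infinite. Summing the three comparisons gives $\form_1^\sub \le \form_2^\sub$ on $H$, which by definition means $L_1^\sub \le L_2^\sub$.

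The only substantive obstacle is that $H_1 \ne H_2$. This is handled entirely by the general form of Lemma~\ref{lem:rm}, together with the fact that \eqref{eq:form:balakrishnan} holds on the whole of $H$ (as argued just before the lemma) rather than only on $H_{L_j}$. One point needs checking: measurability in $s$ of the integrand, so that the integral makes sense. This is automatic, because $s \mapsto \langle (L_j + s)^{-1} u, u \rangle$ is continuous (indeed holomorphic) on $(0, \infty)$.
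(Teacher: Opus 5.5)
Your proposal is correct and follows the paper's proof: the paper compares the representation \eqref{eq:form:balakrishnan} for $\form_1^\sub$ and $\form_2^\sub$ term by term. It uses $\form_1 \le \form_2$ for the $c_1$ term and Lemma~\ref{lem:rm}, in its general version for different subspaces, for the integrand. You additionally check the $0 \cdot \infty$ convention, nonnegativity of the integrands and measurability in $s$, which the paper leaves implicit.
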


\begin{proof}
Suppose that $L_1 \le L_2$, and denote by $\form_{\smash{1}}^\sub$ and $\form_{\smash{2}}^\sub$ the quadratic forms associated to $L_{\smash{1}}^\sub$ and $L_{\smash{2}}^\sub$. The desired inequality $\form_{\smash{1}}^\sub(u) \le \form_{\smash{2}}^\sub(u)$ follows directly from the representation~\eqref{eq:form:balakrishnan} for $\form_{\smash{1}}^\sub$ and $\form_{\smash{2}}^\sub$, combined with the assumption $\form_1 \le \form_2$ and Lemma~\ref{lem:rm}.
\end{proof}

We will need the following addition to the above lemma for strict inequalities.

\begin{lemma}
\label{lem:om:strict}
Let $\psi$ be a complete Bernstein function which does not extend to a meromorphic function on $\C \setminus \{0\}$. Suppose that $L_1, L_2$ are nonnegative definite possibly unbounded self-adjoint operators on closed subspaces $H_1, H_2$ of a Hilbert space $H$, and $L_1 \le L_2$. Denote by $\form_1$, $\form_2$, $\form_{\smash{1}}^\sub$ and $\form_{\smash{2}}^\sub$ the quadratic forms of $L_1$, $L_2$, $L_{\smash{1}}^\sub$ and $L_{\smash{2}}^\sub$, respectively. If $u \in \dom(\form_{\smash{2}}^\sub) \cap H_2$, $u \ne 0$ and $\form_{\smash{1}}^\sub(u) = \form_{\smash{2}}^\sub(u)$, then $u$ belongs to a closed subspace of $H$, the closed linear span of
\formula[eq:om:strict:cyclic]{
 \{ (L_1 + s)^{-1} u : s > 0 \} & = \{ (L_2 + s)^{-1} u : s > 0 \} ,
}
which is invariant for both $L_1$ and $L_2$, and on which $\form_1 = \form_2$.
\end{lemma}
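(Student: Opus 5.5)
The plan is to compare the two Balakrishnan representations \eqref{eq:form:balakrishnan} of $\form_1^\sub(u)$ and $\form_2^\sub(u)$ term by term. Since $\form_1 \le \form_2$, Lemma~\ref{lem:rm} gives $\langle (L_1+s)^{-1}u,u\rangle \ge \langle (L_2+s)^{-1}u,u\rangle$ for every $s>0$. Each term in the representation of $\form_1^\sub(u)$ is therefore at most the corresponding term for $\form_2^\sub(u)$: this holds for the $c_1$ term, the $c_0$ term, and the integrand against $\mu$. We assume $\form_2^\sub(u)<\infty$ and equality of the totals. Then all terms coincide, and in particular
\begin{align*}
 f(s) = \langle (L_1+s)^{-1}u,u\rangle - \langle (L_2+s)^{-1}u,u\rangle \ge 0
\end{align*}
vanishes for $\mu$-almost every $s$.

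The key step is to upgrade this to $f\equiv 0$ on $(0,\infty)$ using the non-meromorphy assumption. The function $f$ extends holomorphically to $\C\setminus(-\infty,0]$, since each term is the Stieltjes-type transform $\int (\lambda+s)^{-1}\,\nu_j(d\lambda)$ of the spectral measure $\nu_j$ of $u$ with respect to $L_j$. Since $\psi$ is not meromorphic on $\C\setminus\{0\}$, the measure $\mu$ is not purely atomic with a discrete set of atoms in $(0,\infty)$. Hence the support of $\mu$ has an accumulation point in $(0,\infty)$, or else $\mu$ has a non-atomic part, whose support then contains an accumulation point in $(0,\infty)$. Either way, the zero set of $f$ in $(0,\infty)$ has an accumulation point inside the domain of holomorphy, so the identity theorem gives $f\equiv 0$. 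I expect this to be the main obstacle: one must check that the $\mu$-full set of zeros really has a finite accumulation point. A closed set $Z$ carrying $\mu$ fully, with no accumulation points in $(0,\infty)$, is discrete, which would force $\mu$ to be atomic on a discrete set, contradicting the assumption. Note also that $f$ is continuous, so $\{f=0\}$ is closed.

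Next I would turn equality of quadratic forms into equality of vectors. Set $A=(L_1+s)^{-1}$ and $B=(L_2+s)^{-1}$, extended by $0$ off $H_1$, $H_2$. By Lemma~\ref{lem:rm}, $A-B\ge0$, and $\langle (A-B)u,u\rangle=0$. For a nonnegative operator this forces $(A-B)u=0$, by Cauchy--Schwarz for the semi-inner product $\langle (A-B)\cdot,\cdot\rangle$. This proves the equality of sets in \eqref{eq:om:strict:cyclic}. Let $K$ be the closed span of $\{(L_1+s)^{-1}u : s>0\}$. To show $K$ is invariant, I would use the resolvent identity: $(L_j+t)^{-1}(L_j+s)^{-1}u$ is a difference quotient of $(L_j+s)^{-1}u$ and $(L_j+t)^{-1}u$ when $s\ne t$, and for $s=t$ it is the derivative in $s$, hence a limit of elements of $K$. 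So $(L_j+t)^{-1}K\subseteq K$ for real $t>0$. Complex $t$ follow by analyticity or by a Neumann series, or one can simply note that $K$ is then reducing because the resolvents are self-adjoint. Moreover $u\in K$: since $u\in H_2$, we have $s(L_2+s)^{-1}u\to u$ as $s\to\infty$.

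Finally, $L_1$ and $L_2$ coincide on $K$, in the sense that their resolvents agree on a dense subset of $K$, hence on all of $K$. Here I would use the resolvent identity again to show that $(L_1+t)^{-1}$ and $(L_2+t)^{-1}$ agree on every generator $(L_j+s)^{-1}u$. Thus the parts of $L_1$ and $L_2$ in $K$ are the same self-adjoint operator, so $\form_1=\form_2$ on $K$.
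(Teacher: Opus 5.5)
Your proposal is correct and follows the paper's proof step for step: the termwise comparison in \eqref{eq:form:balakrishnan}, the identity theorem applied at an accumulation point of the support of $\mu$ in $(0,\infty)$, positivity of $(L_1+s)^{-1}-(L_2+s)^{-1}$ to turn $\langle Tu,u\rangle=0$ into $Tu=0$, and the resolvent equation for invariance of the cyclic space containing $u$. The only difference is cosmetic and comes at the end: you deduce $\form_1=\form_2$ on the cyclic space from agreement of the resolvents there (legitimate, since that space lies in $H_1\cap H_2$), whereas the paper cites uniqueness of the Cauchy transform for the spectral measures of $u$; both routes are fine.
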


\begin{proof}
Suppose that $u \in \dom(\form_{\smash{2}}^\sub)$ and $\form_{\smash{1}}^\sub(u) = \form_{\smash{2}}^\sub(u)$. By~\eqref{eq:form:balakrishnan}, for almost every $s > 0$ with respect to $\mu$, we have
\formula[eq:om:resolvent]{
 \lVert u \rVert^2 - \langle s (L_1 + s)^{-1} u, u \rangle & = \lVert u \rVert^2 - \langle s (L_2 + s)^{-1} u, u \rangle .
}
By our assumption, the support of $\mu$ has an accumulation point in $(0, \infty)$. Since both sides of~\eqref{eq:om:resolvent} are holomorphic functions of $s$ in $\C \setminus (-\infty, 0]$, equality in~\eqref{eq:om:resolvent} extends to all $s \in \C \setminus (-\infty, 0]$.

It follows that for every $s > 0$,
\formula{
 \bigl\langle \bigl( (L_1 + s)^{-1} - (L_2 + s)^{-1} \bigr) u, u \bigr\rangle & = 0 .
}
By Lemma~\ref{lem:rm}, $T = (L_1 + s)^{-1} - (L_2 + s)^{-1}$ is a nonnegative definite operator. Therefore, $\langle T u, u \rangle = 0$ implies that $T u = 0$. We conclude that
\formula{
 (L_1 + s)^{-1} u & = (L_2 + s)^{-1} u
}
for every $s > 0$. Let $H_0$ be the closed linear span of the vectors $(L_1 + s)^{-1} u = (L_2 + s)^{-1} u$, where $s > 0$ (the cyclic space of $u$). Since $u \in H_2$, $u$ is the limit of $s (L_2 + s)^{-1} u$ as $s \to \infty$, and so $u \in H_0$. Using the resolvent equation, we find that $H_0$ is invariant for both $L_1$ and $L_2$. Furthermore, by the uniqueness of the Cauchy transform, the projection-valued spectral measures of $L_1$ and $L_2$ applied to $u$ are equal, and so the spectral theorem implies that the quadratic forms of $L_1$ and $L_2$ coincide on $H_0$; we refer to Chapters~VII and~VIII in~\cite{rs:1} for further discussion.
\end{proof}


\subsection{The Laplace operator}

Let $\Delta$ be the Laplace operator on $\R^d$, and denote by $\form$ the quadratic form associated with $L = -\Delta$. The domain of $\form$ is the Sobolev space $\dom(\form) = H^1(\R^d)$, which consists of functions $u \in L^2(\R^d)$ such that the weak partial derivatives of $u$ are well-defined and belong to $L^2(\R^d)$. For $u \in \dom(\form)$, we have
\formula{
 \form(u) & = \int_{\R^d} \lvert \nabla u(x) \rvert^2 dx .
}
Recall that $\form(u) = \infty$ when $u \notin \dom(\form)$. In Fourier space,
\formula{
 \form(u) & = \frac{1}{(2 \pi)^d} \int_{\R^d} \lvert \xi \rvert^2 \lvert \fourier u(\xi) \rvert^2 d\xi ,
}
with $u \in \dom(\form)$ if and only if $u \in L^2(\R^d)$ and the above integral is finite. The class $C_c^\infty(\R^d)$ of smooth, compactly supported functions is a core of $\form$: every $u \in \dom(\form)$ can be approximated by a sequence $u_n \in C_c^\infty(\R^d)$ so that both $\lVert u_n - u \rVert_2$ and $\form(u - u_n)$ converge to zero as $n \to \infty$.

We claim that if a closed subspace $H_0$ of $L^2(\R^d)$ is invariant for $L$ and contains only functions which are equal to zero outside a given bounded set $D$, then $H_0$ is trivial. Indeed: if $u \in H_0$ is nonzero, then the Fourier transform of $u$ extends to a nonzero entire function on $\C^d$. There exists $\tilde \xi \in \C^d \setminus \R^d$ such that $\fourier u(\tilde \xi) \ne 0$ and $s = -(\tilde \xi_1^2 + \tilde \xi_2^2 + \ldots + \tilde \xi_d^2)$ belongs to $\C \setminus (-\infty, 0]$. If $v = (s + L)^{-1} u$, then $v \in H_0$ and $\fourier v(\xi) = (s + \xi_1^2 + \xi_2^2 + \ldots + \xi_d^2)^{-1} \fourier u(\xi)$. This means that $\fourier v$ is not an entire function (it has a pole at $\tilde \xi$), so $v$ cannot be equal to zero almost everywhere in $\R^d \setminus D$. This proves our claim.

Let $D$ be a nonempty open subset of $\R^d$. We denote by $\Delta^\dir$ the Dirichlet Laplace operator in $D$. By this we mean that the quadratic form of $L^\dir = -\Delta^\dir$ is the restriction of $\form$ to the class $\dom(\form^\dir)$ of functions in $\dom(\form)$ which are equal to zero almost everywhere in $\R^d \setminus D$. 

Note that we can view $\Delta^\dir$ as an unbounded operator on $L^2(\R^d)$, which is, however, not densely defined. Similarly, we may regard $\form^\dir$ as a quadratic form on $L^2(\R^d)$, the domain of which is again not dense in $L^2(\R^d)$. We will always take this perspective, in contrast to the more customary approach, where the Dirichlet Laplace operator acts on functions defined only on $D$. This allows us to write $L \le L^\dir$, and so, by Lemma~\ref{lem:rm}, $(L + s)^{-1} \ge (L^\dir + s)^{-1}$ for every $s > 0$.

\begin{remark}
\label{rem:dirichlet}
Traditionally, $\Delta^\dir$ is defined as the Friedrichs extension of $\Delta$ restricted to $C_c^\infty(D)$. In sufficiently regular domains, such as bounded Lipschitz open sets considered here, this is equivalent to the simplistic definition given above, which is additionally more convenient for our needs. We refer to Section~4.4 in~\cite{fot} for a detailed discussion.
\end{remark}

The Neumann Laplace operator $\Delta^\neu$ in a Lipschitz open set $D$ is also specified by prescribing the quadratic form $\form^\neu$ of $L^\neu = -\Delta^\neu$. We define the domain $\dom(\form^\neu)$ to be the class of $u \in L^2(\R^d)$ which are equal in $D$ to some $\tilde u \in \dom(\form) = H^1(\R^d)$. In this case, we set
\formula{
 \form^\neu(u) & = \int_D \lvert \nabla \tilde u(x) \rvert^2 dx .
}
For a detailed discussion, we refer to Example~1.6.1 in~\cite{fot}.

This definition of $\form^\neu$ is again slightly non-standard, as it is customary to consider functions $u$ defined only on $D$, while we allow $u$ to take arbitrary values in $\R^d \setminus D$. We define the Neumann Laplace operator $\Delta^\neu$ to be equal to $-L^\neu$, where $L^\neu$ is the operator associated to $\form^\neu$. Note that with this definition all functions $u \in L^2(\R^d)$ which are equal to zero in $D$ belong to $\dom(L^\neu)$, and we have $\Delta^\neu u = 0$ in $\R^d \setminus D$ whenever $u \in \dom(\Delta^\neu)$. Clearly, $L \ge L^\neu$, and so, by Lemma~\ref{lem:rm}, $(L + s)^{-1} \le (L^\neu + s)^{-1}$ for every $s > 0$.


\subsection{Subordinate Laplace operators}

We fix a complete Bernstein function $\psi$. Since Fourier transform turns $L = -\Delta$ into a multiplication operator, the subordinate operator $L^\sub = \psi(-\Delta)$ is the self-adjoint operator associated to the quadratic form
\formula{
 \form^\sub(u) & = \frac{1}{(2 \pi)^d} \int_{\R^d} \psi( \lvert \xi \rvert^2 ) \lvert \fourier u(\xi) \rvert^2 d\xi ,
}
and the domain $\dom(\form^\sub)$ consists of those $u \in L^2(\R^d)$ for which the integral on the right-hand side is finite. Equivalently,
\formula{
 \form^\sub(u) & = c_1 \form(u) + c_0 \lVert u \rVert_2^2 + \frac{1}{2} \int_{\R^d} \int_{\R^d} \lvert u(y) - u(x) \rvert^2 \nu(x - y) dx dy ,
}
where, with $p_t(z) = (4 \pi t)^{-d / 2} e^{-\lvert z \rvert^2 / (4 t)}$ denoting the Gauss--Weierstrass kernel, $\nu$ is given by
\formula{
 \nu(z) & = \frac{1}{\pi} \int_0^\infty \biggl( \int_{(0, \infty)} s e^{-s t} \mu(ds) \biggr) p_t(z) dt ;
}
see Chapter~15 in~\cite{ssv}.

We have the following simple result. It exploits our non-standard definitions of $L^\neu$ and $L^\dir$ as operators on $L^2(\R^d)$ rather than $L^2(D)$.

\begin{lemma}
\label{lem:ns:s:ds}
If $D$ is a Lipschitz open set in $\R^d$, then, with the notation introduced above,
\formula[eq:ns:s:ds]{
 0 & \le L^{\neu\sub} \le L^\sub \le L^{\dir\sub} .
}
\end{lemma}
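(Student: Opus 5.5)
The plan is to obtain \eqref{eq:ns:s:ds} directly from Lemma~\ref{lem:om}, applied twice with $H = L^2(\R^d)$. The earlier discussion already records the comparisons $L^\neu \le L \le L^\dir$ in the sense of quadratic forms on $L^2(\R^d)$. Indeed, $\form^\dir$ is the restriction of $\form$ to functions vanishing almost everywhere outside $D$, and $\form^\dir = \infty$ elsewhere, so $\form \le \form^\dir$. For the Neumann side, if $u \in \dom(\form) = H^1(\R^d)$, then $u \in \dom(\form^\neu)$ with $\tilde u = u$, and
\formula{
 \form^\neu(u) = \int_D \lvert \nabla u \rvert^2 \le \int_{\R^d} \lvert \nabla u \rvert^2 = \form(u) .
}
If $u \notin \dom(\form)$, then $\form(u) = \infty$ and the inequality $\form^\neu(u) \le \form(u)$ is trivial.

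One point needs checking: $\form^\neu$ must be well defined, that is, independent of the choice of extension $\tilde u$. This holds because $\nabla \tilde u$ on $D$ is determined by $\tilde u|_D = u|_D$. The paper takes closedness of $\form^\neu$ from~\cite{fot}, so $L^\neu$ is a genuine nonnegative self-adjoint operator on the closed subspace $H_{L^\neu}$. Here $H_{L^\neu}$ is all of $L^2(\R^d)$, since its domain contains all functions vanishing on $D$ together with extensions of $H^1(D)$ functions, and this set is dense. Likewise $L^\dir$ lives on the closed subspace of functions vanishing outside $D$, and $L$ lives on $L^2(\R^d)$.

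With these three comparisons, Lemma~\ref{lem:om} gives $L^{\neu\sub} \le L^\sub$ and $L^\sub \le L^{\dir\sub}$, where the subordinate operators are those defined in the paper's generalized sense on closed subspaces, with forms given by~\eqref{eq:form:balakrishnan}. The remaining inequality $0 \le L^{\neu\sub}$ is immediate: \eqref{eq:form:balakrishnan} is a sum of nonnegative terms, because $\langle s (L^\neu + s)^{-1} u, u \rangle \le \lVert u \rVert^2$ by the resolvent norm bound $s^{-1}$. Alternatively, it follows from $\psi \ge 0$ and the spectral theorem.

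The main obstacle is bookkeeping rather than analysis, namely confirming that the hypotheses of Lemma~\ref{lem:om} hold in the non-densely-defined setting. That setting is already built into Lemma~\ref{lem:rm}, which Lemma~\ref{lem:om} uses, via the cited results of~\cite{vv,hssw}. One should also identify the form $\form^\sub$ of $L^\sub = \psi(-\Delta)$ given by the Fourier formula with the one given by~\eqref{eq:form:balakrishnan}. This follows from the spectral theorem, since $-\Delta$ is unitarily equivalent via $\fourier$ to multiplication by $\lvert \xi \rvert^2$, and $\xi \mapsto \psi(\xi)$ matches~\eqref{eq:cbf} pointwise.
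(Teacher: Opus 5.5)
Your proposal is correct and follows the paper's proof. Both establish $0 \le L^\neu \le L \le L^\dir$ as inequalities of quadratic forms on $L^2(\R^d)$ and then invoke operator monotonicity in the form of Lemma~\ref{lem:om}. The extra checks you add are accurate bookkeeping that the paper leaves implicit: well-definedness and density of $\dom(\form^\neu)$, and $0 \le L^{\neu\sub}$ via nonnegativity of each term in~\eqref{eq:form:balakrishnan}.
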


\begin{proof}
Recall that for every $u \in L^2(\R^d)$, we have
\formula{
 0 & \le \form^\neu(u) \le \form(u) \le \form^\dir(u) ,
}
that is, $0 \le L^\neu \le L \le L^\dir$. Since $\psi$ is operator monotone, \eqref{eq:ns:s:ds} follows by Lemma~\ref{lem:om}.
\end{proof}

Let $D$ be a nonempty open subset of $\R^d$. We define the Dirichlet subordinate operator $L^{\sub\dir}$ in terms of its quadratic form: $\form^{\sub\dir}$ is the restriction of $\form^\sub$ to the class $\dom(\form^{\sub\dir})$ of functions in $\dom(\form^\sub)$ which are equal to zero almost everywhere in $\R^d \setminus D$. This is completely analogous to the definition of the Dirichlet Laplace operator, and Remark~\ref{rem:dirichlet} applies also to $L^{\sub\dir}$. As usual, we set $\form^{\sub\dir}(u) = \infty$ for $u \in L^2(\R^d) \setminus \dom(\form^{\sub\dir})$.

We have the following supplement to Lemma~\ref{lem:ns:s:ds}.

\begin{lemma}
\label{lem:s:sd:ds}
If $D$ is a Lipschitz open set in $\R^d$ and $\psi(\infty) = \infty$, then, with the notation introduced above,
\formula[eq:s:sd:ds]{
 L^\sub \le L^{\sub\dir} \le L^{\dir\sub} .
}
If $\psi(\infty) < \infty$, then~\eqref{eq:s:sd:ds} holds, in the sense of the inequality of the corresponding forms, on the class of functions in $L^2(\R^d)$ which are equal to zero almost everywhere in $\R^d \setminus D$.
\end{lemma}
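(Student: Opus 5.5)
The first inequality $L^\sub \le L^{\sub\dir}$ is immediate from the definition. The form $\form^{\sub\dir}$ is the restriction of $\form^\sub$ to functions vanishing a.e.\ outside $D$, and it is set to $\infty$ elsewhere. Hence $\form^\sub(u) \le \form^{\sub\dir}(u)$ for every $u \in L^2(\R^d)$, whether or not $\psi(\infty)$ is finite.

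The real content is the second inequality $\form^{\sub\dir} \le \form^{\dir\sub}$. I plan to compare the Balakrishnan representations~\eqref{eq:form:balakrishnan} for $L^\sub = \psi(L)$ and for $L^{\dir\sub} = \psi(L^\dir)$, both applied to the same $u$. The key input is $L \le L^\dir$. By Lemma~\ref{lem:rm} this gives $(L + s)^{-1} \ge (L^\dir + s)^{-1}$ for every $s > 0$, and therefore
\[
\lVert u \rVert^2 - \langle s (L + s)^{-1} u, u \rangle \le \lVert u \rVert^2 - \langle s (L^\dir + s)^{-1} u, u \rangle .
\]
Together with $c_1 \form(u) \le c_1 \form^\dir(u)$ and the identical $c_0$ terms, this yields $\form^\sub(u) \le \form^{\dir\sub}(u)$ for all $u$. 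This is exactly Lemma~\ref{lem:om} applied to $L \le L^\dir$, and it is already contained in Lemma~\ref{lem:ns:s:ds}.

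It remains to pass from $\form^\sub$ to $\form^{\sub\dir}$, and I split into two cases.

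\emph{Case $\psi(\infty) = \infty$.} I will check that $\form^{\dir\sub}(u) = \infty$ whenever $u$ is not zero a.e.\ outside $D$. Write $u = u_1 + u_2$ with $u_1 \in H_{L^\dir}$ and $u_2 \ne 0$ in the orthogonal complement. On that complement the resolvent of $L^\dir$ vanishes by convention. So the integrand in~\eqref{eq:form:balakrishnan} contributes $\lVert u_2 \rVert^2 \mu(ds)$, which is added to a nonnegative contribution from $u_1$, using the additivity noted after~\eqref{eq:form:balakrishnan}. If $c_1 > 0$, then $\form^\dir(u) = \infty$ already makes the form infinite. If $c_1 = 0$, then $\psi(\infty) = \infty$ forces $c_0 + \mu((0,\infty))/\pi = \infty$, so $\mu$ has infinite mass and the integral diverges. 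Hence $\dom(\form^{\dir\sub})$ consists of functions vanishing outside $D$. On such functions $\form^{\sub\dir} = \form^\sub \le \form^{\dir\sub}$, and off them the right-hand side is $\infty$. This proves~\eqref{eq:s:sd:ds} on all of $L^2(\R^d)$.

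\emph{Case $\psi(\infty) < \infty$.} Here the argument above shows $\form^{\dir\sub}(u_2) = \psi(\infty)\lVert u_2\rVert^2$ is finite, so the inequality can fail off $D$. I will therefore restrict to $u$ vanishing a.e.\ outside $D$. For such $u$ the same comparison gives $\form^{\sub\dir}(u) = \form^\sub(u) \le \form^{\dir\sub}(u)$, and trivially $\form^\sub(u) = \form^{\sub\dir}(u)$.

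The only delicate point is identifying the domain of $\form^{\dir\sub}$: one must justify that $H_{L^\dir}$ is precisely the closed space of functions vanishing outside $D$. This uses the closure of $\dom(\form^\dir)$ and relies on $D$ being Lipschitz (or merely open, since $C_c^\infty(D)$ is dense in $L^2(D)$).
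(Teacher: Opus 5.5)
Your proof is correct and follows the paper's route. The first inequality comes from the definition, $\form^\sub \le \form^{\dir\sub}$ comes from $L \le L^\dir$ via~\eqref{eq:form:balakrishnan} and Lemma~\ref{lem:rm}, and your check that $\form^{\dir\sub} = \infty$ off $L^2(D)$ when $\psi(\infty) = \infty$ is exactly the paper's identity $\form^{\dir\sub\dir} = \form^{\dir\sub}$, spelled out in more detail (and for that step you only need the inclusion $H_{L^\dir} \subseteq L^2(D)$, immediate from $\dom(\form^\dir) \subseteq L^2(D)$, not equality).
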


\begin{proof}
The first inequality in~\eqref{eq:s:sd:ds} follows directly from the definition of $L^{\sub\dir}$: we have $\form^{\sub\dir}(u) = \form^\sub(u)$ if $u \in \dom(\form^{\sub\dir})$, and $\form^{\sub\dir}(u) = \infty \ge \form^\sub(u)$ if $u \in L^2(\R^d) \setminus \dom(\form^{\sub\dir})$.

In the same way, $\form \le \form^\dir$. Let, not unexpectedly, $\form^{\dir\sub\dir}(u) = \form^{\dir\sub}(u)$ for $u \in \dom(\form^{\dir\sub})$ which are equal to zero almost everywhere in $\R^d \setminus D$, and $\form^{\dir\sub\dir}(u) = \infty$ otherwise. From $\form \le \form^\dir$ and~\eqref{eq:form:balakrishnan} it follows that $\form^\sub \le \form^{\dir\sub}$, and so $\form^{\sub\dir} \le \form^{\dir\sub\dir}$. We have thus shown that
\formula{
 \form^\sub \le \form^{\sub\dir} \le \form^{\dir\sub\dir} .
}
It remains to note that, again by~\eqref{eq:form:balakrishnan}, $\form^{\dir\sub\dir}(u) = \form^{\dir\sub}(u)$ if either $\psi(\infty) = \infty$ and $u \in L^2(\R^d)$ is arbitrary, or $\psi(\infty) < \infty$ and $u \in L^2(\R^d)$ is equal to zero almost everywhere in $\R^d \setminus D$.
\end{proof}

%
%

\section{Eigenvalue estimates}


In this section we prove Theorem~\ref{thm:main}. We use the notation introduced in the previous section, and we consider operators acting on closed subspaces of $L^2(\R^d)$. More precisely, many operators below act on the closed subspace of $L^2(\R^d)$ consisting of functions equal to zero in $\R^d \setminus D$, where $D$ is a given open set. We abuse the notation and identify this subspace with $L^2(D)$; thus,
\formula{
 L^2(D) & = \{ u \in L^2(\R^d) \, : \, u = 0 \text{ in } \R^d \setminus D \} .
}

Suppose that $D$ is a bounded nonempty open subset of $\R^d$. Standard compactness arguments show that the operator $L^\dir = -\Delta^\dir$, acting on $L^2(D)$, has a purely discrete spectrum. The eigenvalues $\mu_n^\dir$ can be arranged in a nondecreasing order, counting multiplicities, and they are described by the Courant--Fischer formula~\eqref{eq:cf}, where $H = L^2(\R^d)$. Since $\dom(\form^\dir)$ is contained in $L^2(D)$, we can replace $H = L^2(\R^d)$ by $L^2(D)$:
\formula{
 \mu_n^\dir & = \min \bigl\{ \max \{\form^\dir(u) \, : \, u \in E , \, \lVert u \rVert_2^2 = 1\} \; : \\
 & \qquad\qquad \text{$E$ is an $n$-dimensional subspace of $L^2(D)$} \bigr\} .
}
Let $\ph_n^\dir$ denote the corresponding complete orthonormal system of eigenfunctions of $L^\dir$ on $L^2(D)$. For further discussion, see Section~XIII.15 in~\cite{rs:4}.

If $D$ is a bounded Lipschitz open set, then also $L^\neu = -\Delta^\neu$, restricted to $L^2(D)$, has a purely discrete spectrum. We denote by $\mu_n^\neu$ the eigenvalues of this restriction of $L^\neu$, arranged in a nondecreasing order and counting multiplicities. The Courant--Fischer theorem applies in this case, with $H = L^2(D)$:
\formula{
 \mu_n^\neu & = \min \bigl\{ \max \{\form^\neu(u) \, : \, u \in E , \, \lVert u \rVert_2^2 = 1\} \; : \\
 & \qquad\qquad \text{$E$ is an $n$-dimensional subspace of $L^2(D)$} \bigr\} .
}
We denote by $\ph_n^\neu$ the corresponding complete orthonormal system of eigenfunctions of $L^\neu$ on $L^2(D)$. Again, we refer to Section~XIII.15 in~\cite{rs:4} for additional information.

The subordinate operators $L^{\dir\sub}$ and $L^{\neu\sub}$ have, by definition, the same eigenfunctions as $L^\dir$ and $L^\neu$, with corresponding eigenvalues $\psi(\mu_n^\dir)$ and $\psi(\mu_n^\neu)$, respectively. These eigenvalues are again described by the Courant--Fischer theorem:
\formula*[eq:cf:dirsub]{
 \psi(\mu_n^\dir) & = \min \bigl\{ \max \{\form^{\dir\sub}(u) \, : \, u \in E , \, \lVert u \rVert_2^2 = 1\} \; : \\
 & \qquad\qquad \text{$E$ is an $n$-dimensional subspace of $L^2(D)$} \bigr\} ,
}
and
\formula*[eq:cf:neusub]{
 \psi(\mu_n^\neu) & = \min \bigl\{ \max \{\form^{\neu\sub}(u) \, : \, u \in E , \, \lVert u \rVert_2^2 = 1\} \; : \\
 & \qquad\qquad \text{$E$ is an $n$-dimensional subspace of $L^2(D)$} \bigr\} .
}
Note that if $\psi(\infty)$ is finite, then $\dom(\form^{\dir\sub}) = L^2(\R^d)$ is not contained in $L^2(D)$, but it is easy to see that nevertheless the infimum in~\eqref{eq:cf:dirsub} is attained when $E$ is a subspace of $L^2(D)$.

If $\psi(\infty) = \infty$, then, by a compactness argument fully analogous to the one mentioned above, also the Dirichlet subordinate operator $L^{\sub\dir} = (\psi(-\Delta))^\dir$ has a purely discrete spectrum. By $\lambda_n$ we denote the eigenvalues arranged in a nondecreasing order, and once again the Courant--Fischer theorem says that
\formula*[eq:cf:subdir]{
 \lambda_n & = \min \bigl\{ \max \{\form^{\sub\dir}(u) \, : \, u \in E , \, \lVert u \rVert_2^2 = 1\} \; : \\
 & \qquad\qquad \text{$E$ is an $n$-dimensional subspace of $L^2(D)$} \bigr\} .
}
We denote the complete orthonormal system of the corresponding eigenfunctions by $\ph_n^{\sub\dir}$. When $\psi(\infty)$ is finite, the picture is essentially the same, but the eigenvalues $\lambda_n$ converge to $\psi(\infty)$, which is therefore the only point in the essential spectrum of $L^{\sub\dir}$. A detailed discussion can be found in~\cite{cs:2}.

The proof of the weak inequalities in Theorem~\ref{thm:main} is now straightforward. More work is needed to prove the strict inequalities.

\begin{proof}[Proof of Theorem~\ref{thm:main}]
By Lemmas~\ref{lem:ns:s:ds} and~\ref{lem:s:sd:ds}, we have $\form^{\neu\sub} \le \form^{\sub\dir} \le \form^{\dir\sub}$ on $L^2(D)$. The desired inequality~\eqref{eq:main} thus follows from the Courant--Fischer formulae~\eqref{eq:cf:dirsub}, \eqref{eq:cf:neusub} and~\eqref{eq:cf:subdir}.

Suppose that $\psi$ does not extend to a meromorphic function in $\C \setminus \{0\}$, and that we have equality in the former of the inequalities~\eqref{eq:main}, that is, $\lambda_n = \psi(\mu_n^\neu)$ for some $n$. Then $L^{\sub\dir}$ and $L^{\neu\sub}$ have a common eigenvector $u \in L^2(D)$ corresponding to the eigenvalue $\lambda_n = \psi(\mu_n^\neu)$. In particular, $u \in \dom(\form^{\sub\dir})$ and $\form^{\neu\sub}(u) = \form^{\sub\dir}(u) = \form^\sub(u)$. Recall that $L^\neu \le L$. Thus, by Lemma~\ref{lem:om:strict}, $u$ belongs to a closed subspace $H_0$ of $L^2(\R^d)$, which is invariant for $L$ and $L^\neu$, and on which $\form = \form^\neu$. Furthermore, since $u \in L^2(D)$ and the resolvent of $L^\neu$ preserves $L^2(D)$, formula~\eqref{eq:om:strict:cyclic} implies that $H_0$ is contained in $L^2(D)$. However, the only closed invariant subspace of $L$ contained in $L^2(D)$ is trivial, a contradiction.

Strictness of the other inequality in~\eqref{eq:main} is proved in a completely analogous manner.
\end{proof}

\begin{proof}[Proof of Theorem~\ref{thm:interval}]
Estimate~\eqref{eq:interval} is a direct consequence of Theorem~\ref{thm:main} and the fact that for $D = (a, b)$ we have
\formula{
 \mu_n^\dir & = \biggl( \frac{n \pi}{b - a} \biggr)^2 , &
 \mu_n^\neu & = \biggl( \frac{(n - 1) \pi}{b - a} \biggr)^2 .
}
Strictness of these inequalities when $\psi$ does not extend to a meromorphic function in $\C \setminus \{0\}$ also follows from Theorem~\ref{thm:main}.
\end{proof}

%
%

\section{Eigenvalues in a ball}

Suppose that $d \ge 2$ and $D$ is the unit ball in $\R^d$. Fix a (nonzero) solid harmonic polynomial $P$ of degree $\ell = 0, 1, \ldots\,$. In this section we consider the Hilbert space
\formula{
 L_P^2(\R^d) & = \{ u \in L^2(\R^d) \, : \, u(x) = P(x) \tilde u(\lvert x \rvert) \text{ for some function } \tilde u \} ,
}
which is a closed subspace of $L^2(\R^d)$.

By Bochner's relation (Corollary on p.~72 in~\cite{stein}), the space $L_{\smash{P}}^2(\R^d)$ is invariant under the Fourier transform. As a consequence, $L_{\smash{P}}^2(\R^d)$ is an invariant subspace for the Laplace operator $L = -\Delta$ and the subordinate operator $L^\sub = \psi(-\Delta)$. Since $D$ is invariant under rotations, the same applies to the Dirichlet and Neumann Laplace operators $\Delta^\dir$ and $\Delta^\neu$, as well as their subordinate versions. We refer to~\cite{dkk:1} for a more detailed discussion.

Let $c_P$ denote the integral of $\lvert P(\thet) \rvert^2$ with respect to the surface measure on the unit sphere in $\R^d$. We consider the unitary map
\formula{
 U_P : L_{\smash{P}}^2(\R^d) \ni P(x) \tilde u(\lvert x \rvert) \mapsto \sqrt{c_P} \, r^\ell \tilde u(r) \in L^2((0, \infty), r^{d - 1} dr) ,
}
where $L^2((0, \infty), r^{d - 1} dr)$ is the weighted Lebesgue space. Under $U_P$, the action of $L = -\Delta$ on $L_{\smash{P}}^2(\R^d)$ is unitarily equivalent to the action of the operator
\formula[eq:radial:operators]{
 \tilde L_{d, \ell} & = -\frac{d^2}{d r^2} - \frac{d - 1}{r} \, \frac{d}{d r} + \frac{\ell (d + \ell - 2)}{r^2}
}
on $L^2((0, \infty), r^{d - 1} dr)$; we refer to Section~C.3 in~\cite{fls} for a detailed discussion. By~\eqref{eq:form:balakrishnan}, also the subordinate operators $\tilde L_{\smash{d, \ell}}^\sub = \psi(\tilde L_{d, \ell})$ on $L^2((0, \infty), r^{d - 1} dr)$ and $L^\sub = \psi(L)$ on $L_{\smash{P}}^2(\R^d)$ are unitarily equivalent. Hence, the Dirichlet subordinate operators $\tilde L_{\smash{d, \ell}}^{\sub\dir}$ on $L^2([0, 1), r^{d - 1} dr)$ and $L^{\sub\dir}$ on $L^2(D) \cap L_{\smash{P}}^2(\R^d)$, obtained from $\tilde L_{\smash{d, \ell}}^\sub$ and $L^\sub$ by restricting the corresponding quadratic forms to functions with $\tilde u$ equal to zero almost everywhere in $[1, \infty)$, are unitarily equivalent, too.

Let us denote the eigenvalues corresponding to eigenfunctions in $L^2(D) \cap L_{\smash{P}}^2(\R^d)$ by the same symbol as for $L^2(D)$, with an additional index $\ell$ in the subscript. Thus, $\mu_{\smash{\ell, n}}^\dir$ and $\mu_{\smash{\ell, n}}^\neu$ are the $n$th smallest eigenvalues of $-\Delta^\dir$ and $-\Delta^\neu$ on $L^2(D) \cap L_{\smash{P}}^2(\R^d)$, respectively, and $\lambda_{\ell, n}$ is the $n$th smallest Dirichlet eigenvalue of $L^{\sub\dir}$ on $L^2(D) \cap L_{\smash{P}}^2(\R^d)$. Unitary equivalence discussed in the previous paragraph shows that $\mu_{\smash{\ell, n}}^\dir$, $\mu_{\smash{\ell, n}}^\neu$ and $\lambda_{\ell, n}$ are the sequences of eigenvalues of $\tilde L_{\smash{d, \ell}}^\dir$ (the operator $\tilde L_{d, \ell}$ in $(0, 1)$ with a Dirichlet boundary condition at $r = 1$), $\tilde L_{\smash{d, \ell}}^\neu$ (the same operator with the Neumann boundary condition at $r = 1$) and $\tilde L_{\smash{d, \ell}}^{\sub\dir}$, respectively. In particular, these eigenvalues do not depend on the choice of the solid harmonic polynomial $P$ of a given degree $\ell$, and so our notation is unambiguous.

By definition, for a fixed $\ell$, $\lambda_{\ell, n}$ is a nondecreasing function of $n$. Below we argue that it is also a nondecreasing function of $\ell$. By~\eqref{eq:radial:operators}, if the dimension $d$ is fixed, $\tilde L_{d, \ell}$ form a nondecreasing sequence of operators. Operator monotonicity of $\psi$ implies the same property of $\tilde L_{\smash{d, \ell}}^\sub$, and consequently also $\tilde L_{\smash{d, \ell}}^{\sub\dir}$ form a nondecreasing sequence. Hence, the Courant--Fischer formula~\eqref{eq:cf} implies that the corresponding eigenvalues $\lambda_{\ell, n}$ are indeed nondecreasing functions of $\ell$.

We claim that if $\psi$ is not constant, the inequalities between the quadratic forms discussed above are strict for every nonzero function in the domains of both forms. For $\tilde L_{d, \ell}$, this follows from~\eqref{eq:radial:operators}: the difference between the quadratic forms of $\tilde L_{d, \ell + 1}$ and $\tilde L_{d, \ell}$ is the strictly positive diagonal form
\formula{
 f & \mapsto (d + 2 \ell - 1) \int_0^\infty \frac{\lvert f(r) \rvert^2}{r^2} \, r^{d - 1} dr .
}
Strict inequality for the subordinate operators $\tilde L_{\smash{d, \ell}}^\sub$ is obtained using~\eqref{eq:form:balakrishnan} and a modification of Lemma~\ref{lem:rm} described in Remark~\ref{rem:rm:strict}. Finally, by restricting the domain, we obtain a strict inequality for $\tilde L_{\smash{d, \ell}}^{\sub\dir}$. Combining this with the Courant--Fischer formula~\eqref{eq:cf}, we find that $\lambda_{\ell, n}$ is strictly increasing with respect to $\ell$. This is essentially the same argument as in Lemma~C.5 in~\cite{fls}, but applied to subordinate operators and arbitrary $n$.

Finally, if $\psi$ is not constant, then standard arguments show that the resolvent of $\tilde L_{\smash{d, \ell}}^{\sub\dir}$ is positivity improving in $(0, 1)$, and so the Perron--Frobenius theorem implies that $\lambda_{\ell, 1} < \lambda_{\ell, 2}$; this is analogous to Lemma~C.4 in~\cite{fls}. We can summarise the above considerations as follows: $\lambda_{\ell, n}$ is always nondecreasing with respect to both $\ell$ and $n$, and if $\psi$ is not constant, then in fact
\formula[eq:monotonicity]{
 \begin{matrix}
  \lambda_{0, 1} & < & \lambda_{0, 2} & \le & \lambda_{0, 3} & \le & \cdots \\
  \rotatebox[origin=c]{-90}{$<$} & & \rotatebox[origin=c]{-90}{$<$} & & \rotatebox[origin=c]{-90}{$<$} & & \\
  \lambda_{1, 1} & < & \lambda_{1, 2} & \le & \lambda_{1, 3} & \le & \cdots \\
  \rotatebox[origin=c]{-90}{$<$} & & \rotatebox[origin=c]{-90}{$<$} & & \rotatebox[origin=c]{-90}{$<$} & & \\
  \lambda_{2, 1} & < & \lambda_{2, 2} & \le & \lambda_{2, 3} & \le & \cdots \\
  \rotatebox[origin=c]{-90}{$<$} & & \rotatebox[origin=c]{-90}{$<$} & & \rotatebox[origin=c]{-90}{$<$} & & \\
  \vdots & & \vdots & & \vdots & &
 \end{matrix}
}

Bochner's relation additionally states that the action of the $d$-dimensional Fourier transform on radial profiles $\tilde u$ of functions in $L_{\smash{P}}^2(\R^d)$ only depends on $d + 2 \ell$, save for the normalisation constant. Hence, if we denote by $\mu_{\ell, n}^\dir(d)$ and $\mu_{0, n}^\dir(d + 2 \ell)$ the Dirichlet eigenvalues in the unit balls in $\R^d$ and $\R^{d + 2 \ell}$, respectively, then
\formula{
 \mu_{\ell, n}^\dir(d) & = \mu_{0, n}^\dir(d + 2 \ell) .
}
We refer to~\cite{dkk:2} for further details. Note that a similar relation does not hold for the Neumann eigenvalues, because the Neumann boundary condition in dimension $d$ translates into a Robin boundary condition in dimension $d + 2 \ell$, not the Neumann one.

Among the various relations between $\mu_{\ell, n}^\dir$ and $\mu_{\ell, n}^\neu$, we will need the following one:
\formula[eq:dir:neu]{
 \mu_{0, n + 1}^\neu & = \mu_{1, n}^\dir
}
for $n = 1, 2, \ldots$ (and we only use $n = 1$). This identity follows easily from the observation that the partial derivative with respect to $x_1$ maps non-constant radial (so corresponding to $\ell = 0$ and $P(x) = 1$) Neumann eigenfunctions in the unit ball into antisymmetric (corresponding to $\ell = 1$ and $P(x) = x_1$) Dirichlet eigenfunctions in the unit ball. A more direct proof uses the fact that $(\mu_{\smash{\ell, n}}^\dir)^{1 / 2}$ are the positive zeroes of $t^{1 - d / 2} J_{d / 2 + \ell - 1}(t)$, while $(\mu_{\smash{\ell, n}}^\neu)^{1 / 2}$ are the positive (if $\ell \ge 1$) or nonnegative (if $\ell = 0$) critical points of $t^{1 - d / 2} J_{d / 2 + \ell - 1}(t)$. The property of Bessel functions $J_\nu$ given in \nist{10.6}{6} in~\cite{nist} implies~\eqref{eq:dir:neu}.

\begin{proof}[Proof of Theorem~\ref{thm:ball}]
The argument that we used to prove Theorem~\ref{thm:main} applies verbatim to the restriction of the Laplace operator $\Delta$ to $L_{\smash{P}}^2(\R^d)$. Formula~\eqref{eq:ball} then corresponds to~\eqref{eq:main}.
\end{proof}

\begin{proof}[Proof of Corollary~\ref{cor:shape}]
By monotonicity of $\lambda_{\ell, n}$ with respect to both $\ell$ and $n$, $\lambda_{0, 1}$ is the least eigenvalue, and the second smallest one (counting multiplicities) is necessarily $\lambda_{0, 2}$ or $\lambda_{1, 1}$. It is thus sufficient to prove that $\lambda_{0, 2} \ge \lambda_{1, 1}$. But this is an immediate consequence of Theorem~\ref{thm:ball} and~\eqref{eq:dir:neu}:
\formula{
 \lambda_{0, 2} & \ge \psi(\mu_{0, 2}^{\neu}) = \psi(\mu_{1, 1}^\dir) \ge \lambda_{1, 1} .
}
If $\psi$ does not extend to a meromorphic function in $\C \setminus \{0\}$, the above inequalities are strict: $\lambda_{0, 2} > \lambda_{1, 1}$. In this case $\psi$ is also non-constant, and so, by~\eqref{eq:monotonicity}, $\lambda_{0, 1}$ is the unique smallest eigenvalue, and the eigenspace corresponding to the second smallest eigenvalue $\lambda_{1, 1}$ contains only antisymmetric functions.
\end{proof}

%
%

\section*{}

\subsection*{Acknowledgements}

We thank Kamil Kaleta for an inspiring discussion on Dirichlet--Neumann bracketing.

The large language model \emph{GPT-6 Astra} (OpenAI) was used to assist with reviewing the mathematical arguments, checking references, and editing earlier drafts of this article.

This research was funded in whole or in part by National Science Centre, Poland, grant number 2023/49/B/ST1/04303. For the purpose of Open Access, the authors have applied a CC-BY public copyright licence to any Author Accepted Manuscript (AAM) version arising from this submission.

%
%

%
%


\begin{thebibliography}{00}

\bibitem{atw}
\textsc{Wolfgang Arendt, Antonius Frederik Maria ter Elst, Mahamadi Warma,}
\newblock
\textit{Fractional powers of sectorial operators via the Dirichlet-to-Neumann operator.}
\newblock
Comm.\@ Partial Differ.\@ Equ.\@ 43(1) (2018): 1--24.
\doi{10.1080/03605302.2017.1363229}

\bibitem{ah}
\textsc{Sigurd Assing, John Herman,}
\newblock
\textit{Extension technique for functions of diffusion operators: a stochastic approach.}
\newblock
Electron.\@ J.\@ Probab.\@ 26 (2021), no.\@ 67: 1--32.
\doi{10.1214/21-EJP624}

\bibitem{bk}
\textsc{Rodrigo Bañuelos, Tadeusz Kulczycki,}
\newblock
\textit{The Cauchy process and the Steklov problem.}
\newblock
J.\@ Funct.\@ Anal.\@ 211 (2004): 355--423.
\doi{10.1016/j.jfa.2004.02.005}

\bibitem{bbdg}
\textsc{Jiří Benedikt, Vladimir Bobkov, Raj Narayan Dhara, Petr Girg,}
\newblock
\textit{Nonradiality of second eigenfunctions of the fractional Laplacian in a ball.}
\newblock
Proc.\@ Amer.\@ Math.\@ Soc.\@ 150 (2022): 5335--5348.
\doi{10.1090/proc/16062}

\bibitem{cs:1}
\textsc{Luis Caffarelli, Luis Silvestre,}
\newblock
\textit{An extension problem related to the fractional Laplacian.}
\newblock
Comm.\@ Partial Differ.\@ Equ.\@ 32 (2007): 1245--1260.
\doi{10.1080/03605300600987306}

\bibitem{cs:2}
\textsc{Zhen-Qing Chen, Renming Song,}
\newblock
\textit{Two-sided eigenvalue estimates for subordinate processes in domains.}
\newblock
J.~Funct.\@ Anal.\@ 226(1) (2005): 90--113.
\doi{10.1016/j.jfa.2005.05.004}

\bibitem{ch}
\textsc{Richard Courant, David Hilbert,}
\newblock
\textit{Methods of Mathematical Physics, Vol.\@ I.}
\newblock
Wiley, 1989.
\doi{10.1002/9783527617210}

\bibitem{deblassie}
\textsc{R. Dante DeBlassie,}
\newblock
\textit{Higher order PDEs and symmetric stable processes.}
\newblock
Probab. Theory Relat. Fields 129 (2004): 495--536.
\doi{10.1007/s00440-004-0347-x}

\bibitem{dfw}
\textsc{Sidy Moctar Djitte, Mouhamed Moustapha Fall, Tobias Weth,}
\newblock
\textit{A generalized fractional Pohozaev identity and applications.}
\newblock
Adv.\@ Calc.\@ Var.\@ 17(1) (2024): 237--253.
\doi{10.1515/acv-2022-0003}

\bibitem{dkk:1}
\textsc{Bartłomiej Dyda, Alexey Kuznetsov, Mateusz Kwaśnicki,}
\newblock
\textit{Fractional Laplace operator and Meijer G-function.}
\newblock
Constructive Approx.\@ 45(3) (2017): 427--448.
\doi{10.1007/s00365-016-9336-4}

\bibitem{dkk:2}
\textsc{Bartłomiej Dyda, Alexey Kuznetsov, Mateusz Kwaśnicki,}
\newblock
\textit{Eigenvalues of the fractional Laplace operator in the unit ball.}
\newblock
J.~London Math.\@ Soc.\@ 95 (2017): 500--518.
\doi{10.1112/jlms.12024}

\bibitem{fftw}
\textsc{Mouhamed Moustapha Fall, Pierre Aime Feulefack, Remi Yvant Temgoua, Tobias Weth,}
\newblock
\textit{Morse index versus radial symmetry for fractional Dirichlet problems.}
\newblock
Adv.\@ Math.\@ 384 (2021), no.\@ 107728: 1--22.
\doi{10.1016/j.aim.2021.107728}

\bibitem{fgmp}
\textsc{Mouhamed Moustapha Fall, Marco Ghimenti, Anna Maria Micheletti, Angela Pistoia,}
\newblock
\textit{Generic properties of eigenvalues of the fractional Laplacian.}
\newblock
Calc.\@ Var.\@ Partial Differ.\@ Equ.\@ 62 (2023), no.\@ 233: 1--17.
\doi{10.1007/s00526-023-02574-8}

\bibitem{ferreira}
\textsc{Rui A. C. Ferreira,}
\newblock
\textit{Anti-symmetry of the second eigenfunction of the fractional Laplace operator in a 3-D ball.}
\newblock
Nonlinear Differ.\@ Equ.\@ Appl.\@ 26 (2019), no.\@ 6: 1--8.
\doi{10.1007/s00030-019-0554-x}

\bibitem{fls}
\textsc{Rupert L. Frank, Enno Lenzmann, Luis Silvestre,}
\newblock
\textit{Uniqueness of radial solutions for the fractional Laplacian.}
\newblock
Comm.\@ Pure Appl.\@ Math.\@ 69(9) (2016): 1671--1726.
\doi{10.1002/cpa.21591}

\bibitem{fot}
\textsc{Masatoshi Fukushima, Yoichi Oshima, Masayoshi Takeda,}
\newblock
\textit{Dirichlet Forms and Symmetric Markov Processes.}
\newblock
Second revised and extended edition. De Gruyter, Berlin-New York, 2011.
\doi{10.1515/9783110218091}

\bibitem{gms}
\textsc{José E.\@ Galé, Pedro J.\@ Miana, Pablo Raúl Stinga,}
\newblock
\textit{Extension problem and fractional operators: semigroups and wave equations.}
\newblock
J.\@ Evol.\@ Equ.\@ 13(2) (2013): 343--368.
\doi{10.1007/s00028-013-0182-6}

\bibitem{hssw}
\textsc{Seppo Hassi, Adrian Sandovici, Hendrik S.\,V.\@ de Snoo, Henrik Winkler,}
\newblock
\textit{Form sums of nonnegative selfadjoint operators.}
\newblock
Acta Math.\@ Hung.\@ 111(1--2) (2006): 81--105.
\doi{10.1007/s10474-006-0036-6}

\bibitem{kkms}
\textsc{Tadeusz Kulczycki, Mateusz Kwaśnicki, Jacek Małecki, Andrzej Stós,}
\newblock
\textit{Spectral properties of the Cauchy process on half-line and interval.}
\newblock
Proc.\@ London Math.\@ Soc.\@ 101(2) (2010): 589--622.
\doi{10.1112/plms/pdq010}

\bibitem{kwasnicki:1}
\textsc{Mateusz Kwaśnicki,}
\newblock
\textit{Eigenvalues of the Cauchy process on an interval have at most double multiplicity.}
\newblock
Semigroup Forum 79(1) (2009): 183--192.
\doi{10.1007/s00233-009-9166-9}

\bibitem{kwasnicki:2}
\textsc{Mateusz Kwaśnicki,}
\newblock
\textit{Eigenvalues of the fractional Laplace operator in the interval.}
\newblock
J.\@ Funct.\@ Anal.\@ 262(5) (2012): 2379--2402.
\doi{10.1016/j.jfa.2011.12.004}

\bibitem{kwasnicki}
\textsc{Mateusz Kwaśnicki,}
\newblock
\textit{Simplicity of eigenvalues of the fractional Laplace operator in an interval.}
\newblock
Unpublished, 2023.
\arxiv{2311.00713}

\bibitem{km}
\textsc{Mateusz Kwaśnicki, Jacek Mucha,}
\newblock
\textit{Extension technique for complete Bernstein functions of the Laplace operator.}
\newblock
J.~Evol.\@ Equ.\@ 18(3) (2018): 1341--1379.
\doi{10.1007/s00028-018-0444-4}

\bibitem{lee}
\textsc{David Lee,}
\newblock
\textit{An extension problem for higher order operators and operators of logarithmic type via renormalization.}
\newblock
J.\@ Math.\@ Anal.\@ Appl.\@ 559(2) (2026), no.\@ 130510: 1--27.
\doi{10.1016/j.jmaa.2026.130510}

\bibitem{ms}
\textsc{Celso Martínez Carracedo, Miguel Sanz Alix,}
\newblock
\textit{The Theory of Fractional Powers of Operators.}
\newblock
North-Holland Math.\@ Studies 187, Elsevier, Amsterdam, 2001.
\isbn{978-0-444-88797-9}

\bibitem{mn:1}
\textsc{Roberta Musina, Alexander I.\@ Nazarov,}
\newblock
\textit{On Fractional Laplacians.}
\newblock
Comm.\@ Partial Differ.\@ Equ.\@ 39(9) (2014): 1780--1790.
\doi{10.1080/03605302.2013.864304}

\bibitem{mn:2}
\textsc{Roberta Musina, Alexander I.\@ Nazarov,}
\newblock
\textit{On Fractional Laplacians -- 2.}
\newblock
Ann.\@ Inst.\@ Henri Poincaré C Anal.\@ Non Linéaire 33(6) (2016): 1667--1673.
\doi{10.1016/j.anihpc.2015.08.001}

\bibitem{nazarov}
\textsc{Alexander I.\@ Nazarov,}
\newblock
\textit{On comparison of fractional Laplacians.}
\newblock
Nonlinear Anal.\@ 218 (2022), no.\@ 112790: 1--7.
\doi{10.1016/j.na.2022.112790}

\bibitem{ouhabaz}
\textsc{El Maati Ouhabaz,}
\newblock
\textit{Analysis of Heat Equations on Domains.}
\newblock
Princeton University Press, 2005.
\isbn{9780691120164}

\bibitem{rs:1}
\textsc{Michael Reed, Barry Simon,}
\newblock
\textit{Methods of Modern Mathematical Physics, Vol.\@ 1: Functional Analysis.}
\newblock
Academic Press, 1980.
\isbn{9780125850506}

\bibitem{rs:4}
\textsc{Michael Reed, Barry Simon,}
\newblock
\textit{Methods of Modern Mathematical Physics, Vol.\@ 4: Analysis of Operators.}
\newblock
Academic Press, 1978.
\isbn{9780080570457}

\bibitem{ssv}
\textsc{René L.\@ Schilling, Renming Song, Zoran Vondraček,}
\newblock
\textit{Bernstein Functions: Theory and Applications.}
\newblock
Third revised and extended edition. De Gruyter, 2026.
\doi{10.1515/9783111295121}

\bibitem{stein}
\textsc{Elias M.\@ Stein,}
\newblock
\textit{Singular integrals and differentiability properties of functions.}
\newblock
Princeton University Press, Princeton, 1970.
\doi{10.1515/9781400883882}

\bibitem{st}
\textsc{Pablo Raúl Stinga, José Luis Torrea,}
\newblock
\textit{Extension problem and Harnack's inequality for some fractional operators.}
\newblock
Comm.\@ Partial Differ.\@ Equ.\@ 35 (2010): 2092--2122. 
\doi{10.1080/03605301003735680}

\bibitem{tan}
\textsc{Jinggang Tan,}
\newblock
\textit{The Brezis--Nirenberg type problem involving the square root of the Laplacian.}
\newblock
Calc.\@ Var.\@ Partial Differ.\@ Equ.\@ 42(1--2) (2011): 21--41.
\doi{10.1007/s00526-010-0378-3}

\bibitem{vv}
\textsc{Hendrik Vogt, Jürgen Voigt,}
\newblock
\textit{Increasing sequences of sectorial forms.}
\newblock
Czech.\@ Math.~J.\@ 70(4) (2020): 1033--1046.
\doi{10.21136/CMJ.2020.0101-19}

\bibitem{nist}
\textit{NIST Digital Library of Mathematical Functions,}
\href{https://dlmf.nist.gov/}{\textsf{dlmf.nist.gov}}\textsf{,}
Release 1.2.4 of 2025-03-15.
\textsc{F.\,W.\,J.\@ Olver, A.\,B.\@ Olde Daalhuis, D.\,W.\@ Lozier, B.\,I.\@ Schneider, R.\,F.\@ Boisvert, C.\,W.\@ Clark, B.\,R.\@ Miller, B.\,V.\@ Saunders, H.\,S.\@ Cohl,} and \textsc{M.\,A.\@ McClain,} eds.

\end{thebibliography}
\end{document}